\theoremstyle{plain} 
\newtheorem{thm}{Theorem}
\newtheorem{cor}{Corollary}
\newtheorem{prop}{Proposition}
\newtheorem{lem}{Lemma}
\theoremstyle{definition}
\theoremstyle{remark}
\newtheorem{cond}{Condition}
\newtheorem*{rsthm}{Robbins--Siegmund Theorem}
\newcommand{\E}{\mathsf{E}}
\newcommand{\unif}{{\sf Unif}}
\newcommand{\nm}{{\sf N}}
\newcommand{\A}{\mathcal{A}} 
\newcommand{\RR}{\mathbb{R}}
\newcommand{\XX}{\mathbb{X}}
\newcommand{\UU}{\mathbb{U}}
\newcommand{\Phat}{\widehat{P}}
\renewcommand{\P}{\mathscr{P}}
\newcommand{\M}{\mathscr{M}}
\newcommand{\Ptilde}{\widetilde P}
\title{Revisiting consistency of a recursive estimator of mixing distributions}
\author{Vaidehi Dixit\footnote{Department of Statistics, North Carolina State University; {\tt vdixit@ncsu.edu}, {\tt rgmarti3@ncsu.edu}} \quad and \quad Ryan Martin$^*$}
\date{\today}
\begin{document}

\maketitle 

\begin{abstract}
Estimation of the mixing distribution under a general mixture model is a very difficult problem, especially when the mixing distribution is assumed to have a density. {\em Predictive recursion} (PR) is a fast, recursive algorithm for nonparametric estimation of a mixing distribution/density in general mixture models.  However, the existing PR consistency results make rather strong assumptions, some of which fail for a class of mixture models relevant for monotone density estimation, namely, scale mixtures of uniform kernels. In this paper, we develop new consistency results for PR under weaker conditions. Armed with this new theory, we prove that PR is consistent for the scale mixture of uniforms problem, and we show that the corresponding PR mixture density estimator has very good practical performance compared to several existing methods for monotone density estimation. 

\smallskip

\emph{Keywords and phrases:} Deconvolution; mixture model; monotone density estimation; predictive recursion; robustness.
\end{abstract}

\section{Introduction}

Mixture models are widely used in statistics and machine learning, often for density estimation and clustering.  Here we will be considering a general version of the mixture model, where the mixture density is given by 
\begin{equation}
\label{mixture}
    m_{P}(x) = \int_\UU k(x \mid u) \, P(du),
\end{equation}
where $k$ is a known kernel, i.e., where $x \mapsto k(x \mid u)$ is a density for each $u \in \UU$, and $P$ is the unknown mixing distribution on (the Borel $\sigma$-algebra of) $\UU$.  An advantage to this general form is its flexibility: depending on the kernel, the mixture density $m_P$ can take virtually any shape \citep[e.g.,][p.~572]{dasgupta2008}, making such mixtures a powerful modeling tool for robust, nonparametric density estimation.  Here we will assume that we have independent and identically distributed observations from a density $m$---which may or may not have the form \eqref{mixture}---and our goal is to fit the above mixture model, estimate the mixing distribution $P$, and, in turn, estimate the density $m$.  

An alternative perspective on the mixture model formulation considers a hierarchical formulation, where the first layer has iid $\UU$-valued random variables, $U_1,\ldots,U_n$, from $P$, and then the second layer has 
\[ (X_i \mid U_i) \sim k(x \mid U_i), \quad \text{independent, $i=1,\ldots,n$}. \]
The idea is that the $U_i$'s are latent/unobservable variables and the $X_i$'s are the observable data.  It is easy to check that, marginally, the $X_i$'s are iid with density $m_P$ as in \eqref{mixture}.  The classical deconvolution problem \citep[e.g.,][]{stefanski1990, fan1991} is a special case where $k(x \mid u)$ is such that the second layer above could be described as ``$X_i = U_i + \text{noise}$.''  This hierarchical formulation sheds light on the difficulties of the problem we are considering; that is, our goal is to estimate the distribution $P$ of the latent variables $U_1,\ldots,U_n$ based only on the corrupted observations $X_1,\ldots,X_n$. 

For fitting the general mixture model \eqref{mixture}, a number of different strategies are available in the literature. 
A natural approach is to use the nonparametric maximum likelihood estimator (MLE) of $P$ \citep{lindsay1995,eggermont1995} and the corresponding plug-in estimate of the mixture density $m_P$. An interesting feature of the nonparametric MLE of $P$ is that it is almost surely a discrete distribution \citep[e.g.,][]{lindsay1995}. Another approach is to assume discreteness of $P$ with a fixed number of components and the component parameters are estimated via EM \citep{dempster1977,mclachlanpeel2000, teel2015}. Bayesian approaches have also been explored in this context; either by having a prior on $P$ like in \citet{vandykmeng2001}, or a prior on the number of components of $P$ like in \citet{richardsongreen1997}.


An alternative to the likelihood-based frameworks mentioned above, \citet{newtonetal1998} proposed a recursive algorithm for nonparametric estimation of $P$, originally designed to serve as an approximation of the posterior mean under the Dirichlet process mixture formulation; see, also, \citet{newtonzhang1999}.  The so-called {\em predictive recursion} (PR) algorithm estimates the mixing distribution recursively, starting with an initial guess $P_0$ and applies a simple update $(P_{i-1}, X_i) \mapsto P_i$, for each $i=1,\ldots,n$, resulting in an estimate $P_n$ of $P$ and a corresponding estimate $m_n = m_{P_n}$ of $m$.  Advantages of the PR estimator include its speed and---compared to likelihood-based methods whose estimates of $P$ are effectively discrete---its ability to estimate a mixing distribution that has a smooth density with respect to any user-specified dominating measure.  Further details about the PR algorithm and its properties are discussed in Section~\ref{pr}.  

Not being likelihood-based makes the theoretical justification of the PR estimator of $P$ not straightforward.  It was not until \citet{newton2002} that a first theoretical convergence analysis of PR was presented, establishing the asymptotic consistency of the PR estimator $P_n$ as $n \to \infty$.  Unfortunately, there was a gap in Newton's proof, later filled by \citet{ghoshtokdar2006}.  These first results, along with those in \citet{martinghosh2008}, focus primarily on the case where $\UU$ is a known finite set.  \citet{tmg2009} extended the consistency results to the case of compact $\UU$, which was extended further by \citet{martintokdar2009} who covered the case of model misspecification, where the true density $m$ need not have exactly the form \eqref{mixture}, and bounded the rate of convergence. 

However, even the latter results are based on conditions that can be too restrictive in applications.  For example, \citet{williamson1955} showed that monotone densities are characterized as mixtures of the form \eqref{mixture} where $\UU = [0,\infty)$ and $k(x \mid u) = u^{-1} 1_{[0,u]}(x)$ is the uniform kernel, $\unif(x \mid 0, u)$, with $1_A(x)$ being the indicator function of a set $A$.  But for this particular kernel, it is not possible to check the sufficient conditions required in, e.g., Theorem~4.5 of \citet{martintokdar2009}.  Similar issues would arise in other mixture model applications.  Motivated by this deficiency in the state of the art, the focus of the present paper is to establish new asymptotic consistency properties for the PR estimator under weaker and more easily verified conditions.  

Following a brief review of the existing theory for PR in Section~\ref{pr}, we establish convergence properties of the PR estimator---both the mixture density and the mixing distribution---under weaker conditions in Section~\ref{results}. We then apply these new results in Section~\ref{monotone} to our motivating example, namely, monotone density estimation via mixtures of uniform kernels.  There we first give a characterization of the best mixing distribution and mixture density within a special class of uniform mixtures.  This characterization suggests a particular formulation of the PR algorithm and we use the general results presented in Section~\ref{results} to prove that PR consistently estimates this best mixture. Our choice to focus on a special class of uniform mixtures generally introduces some model misspecification bias, but we show that this bias is a vanishing function of two user-specified parameters. Therefore, the bias has no practical impact on PR's performance, as our numerical examples confirm. Finally, some concluding remarks are given in Section~\ref{S:discuss}.  Technical details and proofs are presented in the Appendix. 


\section{Background on PR}
\label{pr}

As mentioned briefly above, PR is a stochastic algorithm designed for fast, nonparametric estimation of mixing distributions.  The algorithm's inputs include the kernel $k$, an initial guess $P_0$ of the mixing distribution, supported on $\UU$, a rule for defining a sequence of weights $i \mapsto w_i \in (0,1)$, and a sequence of data points $X_1,X_2,\ldots$.  Then the recursive updates first presented in \cite{newtonetal1998}  define the PR algorithm:
\begin{equation}
\label{eq:pr}
P_i(du) = (1-w_i) \, P_{i-1}(du) + w_i \frac{k(X_i \mid u) \, P_{i-1}(du)}{\int_\UU k(X_i \mid v) \, P_{i-1}(dv)}, \quad u \in \UU, \quad i \geq 1.
\end{equation}
After $n$ data points have been observed, the mixing distribution estimator is $P_n$, and the corresponding mixture density estimator is $m_n = m_{P_n}$ defined according to \eqref{mixture}. To understand the motivation behind PR, observe that the $i^{th}$ PR update is just a weighted average of $P_{i-1}(du)$ and the posterior for $U$ with prior $P_{i-1}(du)$ and kernel likelihood $k(X_i \mid u)$. The weights, $w_i$, need to be decreasing in $i$ but not too quickly; this will be made more precise below.  Some recent and novel applications of PR can be found in \citet{scott.FDRreg}, \citet{scott.fdrsmooth}, and \citet{woody.scott.post}. 

If the user has a specific dominating measure $\mu$ on $\UU$ in mind, then he/she can incorporate that information into the algorithm.  That this, the updates in \eqref{eq:pr} can be expressed in terms of the density or Radon--Nikodym derivative $p_i = dP_i/d\mu$ as 
\[ p_i(u) = (1-w_i) \, p_{i-1}(u) + w_i \frac{k(X_i \mid u) \, p_{i-1}(u)}{\int_\UU k(X_i \mid v) \, p_{i-1}(v) \, \mu(dv)}, \quad u \in \UU, \quad i \geq 1, \]
where $p_0 = dP_0/d\mu$ is the initial guess.  Therefore, PR can be used to estimate a {\em mixing density}, compared to the nonparametric MLE, $\Phat$, which is almost surely discrete.  Moreover, when the densities are evaluated on a fixed grid in $\UU$, and the normalizing constant in the denominator is evaluated using quadrature, computation of the PR estimate, $P_n$, is fast and simple---done in $O(n)$ operations---compared to the nonparametric MLE or a Bayesian estimate based on Markov chain Monte Carlo (MCMC).

The above algorithm is described for the case when data points are arriving one at a time, but, of course, the same procedure can be carried out when the data $X_1,\ldots,X_n$ comes in a batch.  When data are both batched and iid, as we consider here, one might be troubled by the fact that $P_n$ depends on the order in which the data are processed.  In particular, while there are some potential advantages to PR's order-dependence (see \cite{dixitmartin2019}), it implies that $P_n$ is not a function of a minimal sufficient statistic.  To overcome this, \citet{newton2002} suggested that one could evaluate the estimator $P_n$ separately on several random permutations of the data sequence and then take averages over permutations.  This can be seen as a Monte Carlo estimate of the Rao--Blackwellized estimator, the average over all permutations.  It has been shown empirically \citep[e.g.,][]{mt-test} that it only takes a few random permutations to remove the order-dependence, so, with the inherent computational efficiency of PR, the permutation-averaged version is still much faster than, say, MCMC.  

Not being Bayesian or maximum likelihood estimators, it is not immediately obvious that the PR estimates, $P_n$ and $m_n$, would have any desirable statistical properties.  It has, however, been shown that, under certain conditions, both $P_n$ and $m_n$ are consistent estimators.  Before stating these sufficient conditions for consistency, we need to describe {\em what} the PR estimates are estimating in general.  

Suppose the true density of the iid data $X_1,\ldots,X_n$ is $m^\star$.  Of course, there is generally no way to know if $m^\star$ can be expressed as a mixture model of the form \eqref{mixture} for a particular kernel, $k$.  When the mixture model is incorrectly specified, there is no ``$P^\star$'' for the PR estimator $P_n$ to converge to, and we cannot expect $m_n$ to be a consistent estimator of $m^\star$.  Instead, there may be a mixture density, $m^\dagger(x) = \int k(x \mid u) \, P^\dagger(du)$, that is ``closest'' to $m^\star$, and that $P_n$ and $m_n$ would converge to $P^\dagger$ and $m^\dagger$, respectively.  Proximity here is measured in terms of the Kullback--Leibler divergence, 
\[ K(m^\star, m) = \int \log\{ m^\star(x) / m(x) \} \, m^\star(x) \, dx. \]
More precisely, let $\P$ denote (a possibly proper subset of) the collection of probability distributions $P$ on $\UU$, and define the corresponding set of mixtures of the form \eqref{mixture} for a given kernel $k$, 
\[ \M = \M(k, \P) = \{m_P: P \in \overline{\P}\}, \]
where $\overline{\P}$ is the closure of $\P$ with respect to the weak topology, i.e., $\P$ plus all possible limits of weakly convergent sequences in $\P$.  To avoid vaccuous cases, we will assume that $K(m^\star, m)$ is finite for at least one $m \in \M$. This is not a trivial assumption, however; see Section~\ref{monotone}.  In this case, the ``best approximation'' of $m^\star$ in $\M$ is the {\em Kullback--Leibler minimizer}, $m^\dagger$, that satisfies  
\begin{equation}
\label{eq:KL.minimizer}
K(m^\star, m^\dagger) = \inf\{K(m^\star, m): m \in \M\} 
\end{equation}
A relevant question is whether such a minimizer exists and if it is unique. Assuming that $K(m^\star,m)$ is finite for at least one $m \in \M$ and given that it is a convex function, we can expect that a minimizer $m^\dagger$ exists and is unique. Existence of a $P^\dagger$ corresponding to $m^\dagger$ is guaranteed by assuming certain conditions on $k$ and $\UU$; see Conditions~A1 and A2 in \citet{martintokdar2009} and, more generally, \citet[][Ch.~8]{liesevadja}. However, uniqueness of $P^\dagger$ requires identifiability of the mixture model \eqref{mixture} in $P$. 

In \citet{tmg2009}, consistency of the PR estimators was established in the case where the mixture model was correctly specified, i.e., when $m^\star \in \M$, so that there exists a true $P^\star \in \P$.  That is, under certain conditions, they showed $K(m^\star, m_n) \to 0$ almost surely and that $P_n \to P^\star$ weakly almost surely.  \citet{martintokdar2009} extended these consistency results to the case where the mixture model is not necessarily correctly specified, i.e., where possibly $m^\star \not\in \M$.  This extension is a practically important one, as it provides a theoretical basis for the PR-based marginal likelihood estimation framework developed in \citet{martintokdar2011} and later applied in, e.g., \citet{martinhan2016}, \citet{dixitmartin2020}.  Under conditions slightly stronger than those given in \cite{tmg2009} for the correctly specified case, they showed that $K(m^\star, m_n) \to K(m^\star, m^\dagger)$ and $P_n \to P^\dagger$ weakly, both almost surely. This implies, for example, that the PR estimates do the best they could, asymptotically, relative to the specified model. It turns out, however, that the sufficient conditions stated in \citet{martintokdar2009}, very similar to those in \citet{tmg2009}, are rather restrictive.  The most problematic of those assumptions is the following: 
\begin{equation}
\label{eq:old.bound}
\sup_{u_1, u_2 \in \UU} \int \Bigl\{ \frac{k(x \mid u_1)}{k(x \mid u_2)} \Bigr\}^{2} m^\star(x) \, dx < \infty.
\end{equation}
For nice kernels like $k(x \mid u) = \nm(x \mid u, \sigma^2)$ for a fixed $\sigma^2 > 0$, if $\UU$ is compact and $m^\star$ has Gaussian-like tails, then \eqref{eq:old.bound} can be satisfied.  However, if $m^\star$ is heavier-tailed, then \eqref{eq:old.bound} could easily fail.  More concerning is if we are considering a not-so-nice kernel, such as uniform: $k(x \mid u) = \unif(x \mid 0, u)$, for $x > 0$ and $u > 0$; this is the natural kernel in the case where $m^\star$ is monotone non-increasing on $[0,\infty)$.  In this case, the $u$-dependent support implies that the ratio in the above display is infinite on an open interval and, hence, \eqref{eq:old.bound} obviously fails.  The difficulty in verifying condition \eqref{eq:old.bound} in several practical applications is what motivated our investigation into potentially weaker sufficient conditions and, in turn, the present paper.

\section{New consistency results}
\label{results}

\subsection{Conditions}
\label{SS:setup}

The goal is to develop a new set of sufficient conditions for PR consistency that are weak enough that they can be checked in the applications we mentioned above, in particular, the case of uniform kernels for monotone density estimation.  First we make clear the setup/conditions, and then we present the main results.  

\begin{cond} 
\label{cond:weights}
The PR algorithm's weights satisfy $w_i = a(i+1)^{-1}$, for $a < \frac29$.
\end{cond}

\begin{cond}
\label{cond:support}
The mixing distribution support, $\UU$, is compact.
\end{cond}


\begin{cond}
\label{cond:integrable}
The kernel, the initial guess $P_0$, with corresponding $m_0 = m_{P_0}$, and the true $m^\star$ satisfy the following integrability property:
\begin{equation}
\label{eq:new.bound}
\sup_{u \in \UU} \int \Bigl\{ \frac{k(x \mid u)}{m_0(x)} \Bigr\}^2 \, m^\star(x) \, dx < \infty.
\end{equation}
\end{cond}

In the previous literature on this topic, and also in the literature on stochastic approximation more generally, the weights/step sizes are assumed to satisfy 
\[ w_i > 0, \quad \sum_{i=1}^\infty w_i = \infty, \quad \text{and} \quad \sum_{i=1}^\infty w_i^2 < \infty. \]
Of course, the specific weights in Condition~\ref{cond:weights}---which are of the same form as the weights used in \citet{hahn.martin.walker.pred}---satisfy these conditions, but others do to.  The reason we adopt this specific choice is that it allows us to replace \eqref{eq:old.bound} with the weaker bound \eqref{eq:new.bound} discussed more below.  And since the choice of weights is entirely in the hands of the user, while the choice of kernel may be determined by the context of the problem and $m^\star$ is a choice made by ``Nature'' and hidden from the user, it is best to sacrifice on generality in directions the user can control.  

Condition~\ref{cond:support} assumes that the mixing distribution support is compact, but this is not much of a restriction in practice, since it can be taken as large as the user pleases.  Compactness of $\UU$ is not strictly needed for the results presented below, but (a)~some more complicated notion of compactness is needed, as we briefly discuss in the paragraph leading up to Corollary~\ref{cor:mixing}, and (b)~Condition~\ref{cond:integrable} might be difficult to check without $\UU$ being compact.  For these reasons, we opt for the simpler albeit slightly more restrictive compactness condition listed above.  

Finally, the most complicated assumption is in Condition~4, about integrability.  To understand this better, it may help to re-express the integrand as 
\[ \frac{k(x \mid u)}{m_0(x)} \cdot \frac{m^\star(x)}{m_0(x)} \cdot k(x \mid u). \]
First, if the PR prior guess $P_0$ is not too tightly concentrated, then the mixture $m_0$ would be heavier-tailed than any individual kernel $k(\cdot \mid u)$.  In that case, the first ratio in the above display would be bounded, or at least would not increasing too rapidly.  Second, we cannot expect PR, or any mixture model-based method for that matter, to be able to do a good job of estimating $m^\star$ if a mixture with a relatively diffuse mixing distribution cannot adequately cover the support of $m^\star$.  So the heart of Condition~\ref{cond:integrable} is an assumption that the posited mixture model {\em can} adequately cover the support of $m^\star$, in the sense that the second ratio in the above display is not blowing up too rapidly.  Finally, if the two ratios are well controlled, then the integral with respect to $k(\cdot \mid u)$ should be bounded uniformly in $u$.  We shall see below, in Section~\ref{monotone}, that \eqref{eq:new.bound} can be checked for uniform kernels while the condition \eqref{eq:old.bound} in \citet{martintokdar2009} cannot. 




\subsection{Main results}
\label{consistency}

Our goal in this section is to show that the PR estimator, $m_n = m_{P_n}$, of $m^\star$ is consistent in the sense that $K(m^\star, m_n)$ converges almost surely to $\inf_{m \in \M} K(m^\star, m)$, the minimum Kullback--Leibler divergence over the posited mixture model class $\M$.  In the special case where $m^\star \in \M$, this implies consistency in the usual sense: $K(m^\star, m_n) \to 0$ almost surely.  In either case, it says that the PR estimator, $m_n$, is close to the best possible mixture approximation of $m^\star$, at least asymptotically.  We will also show how consistency of the mixing distribution estimator can be established from consistency of the mixture, but this will require further explanation; see below.  

\begin{thm}
\label{thm:mixture.KL}
Under Conditions~\ref{cond:weights}--\ref{cond:integrable}, the PR estimator, $m_n$, of the density $m^\star$ satisfies $K(m^\star, m_n) \to \inf_{m \in \M} K(m^\star, m)$ almost surely.  In particular, if $m^\star \in \M$, then $K(m^\star, m_n) \to 0$ almost surely.
\end{thm}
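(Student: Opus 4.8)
The engine for the proof will be the Robbins--Siegmund almost-supermartingale convergence theorem, applied to the sequence $K_i := K(m^\star, m_i)$, where $m_i = m_{P_i}$ is the PR mixture estimate after $i$ updates and $\mathcal{F}_i = \sigma(X_1,\dots,X_i)$. The plan is to produce nonnegative, $\mathcal{F}_{i-1}$-measurable quantities $A_{i-1}$, measuring the sub-optimality of $P_{i-1}$, together with a summable error sequence $\xi_i$, such that
\[ \E[K_i \mid \mathcal{F}_{i-1}] \le K_{i-1} - w_i A_{i-1} + \xi_i, \qquad \textstyle\sum_i \xi_i < \infty \text{ a.s.} \]
Robbins--Siegmund then delivers two conclusions at once: $K_i$ converges almost surely to a finite limit $K_\infty$, and $\sum_i w_i A_{i-1} < \infty$ almost surely. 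Since the weights of Condition~\ref{cond:weights} satisfy $\sum_i w_i = \infty$, the latter forces $\liminf_i A_{i-1} = 0$, and the final task is to identify $K_\infty$ with $\inf_{m\in\M} K(m^\star,m)$.

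First I would derive the one-step recursion. At the level of densities, \eqref{eq:pr} reads $m_i(x) = (1-w_i) m_{i-1}(x) + w_i\, g_{i-1}(x,X_i)/m_{i-1}(X_i)$, with $g_{i-1}(x,y) = \int_\UU k(x\mid u) k(y\mid u)\, P_{i-1}(du)$. Writing $r_i(x) = g_{i-1}(x,X_i)/\{m_{i-1}(x)\, m_{i-1}(X_i)\}$, the increment of $\log m_i$ equals $\log\{1 + w_i(r_i(x)-1)\}$, whose argument is bounded below by $1-w_i>0$, so a second-order Taylor bound with remainder controlled by $(1-w_i)^{-2}$ gives
\[ K_i \le K_{i-1} - w_i \int m^\star (r_i - 1)\,dx + \frac{w_i^2}{2(1-w_i)^2} \int m^\star (r_i - 1)^2\, dx. \]
Taking $\E[\cdot\mid\mathcal{F}_{i-1}]$ and integrating $X_i \sim m^\star$ converts the first-order term into $-w_i\{\int \psi_{i-1}^2\,dP_{i-1} - 1\} = -w_i\, \V_{P_{i-1}}(\psi_{i-1})$, where $\psi_{i-1}(u) = \int m^\star(x)\, k(x\mid u)/m_{i-1}(x)\,dx$ and one uses the identity $\int \psi_{i-1}\,dP_{i-1} = 1$. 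This nonnegative variance is the sub-optimality $A_{i-1}$, vanishing exactly when $\psi_{i-1}\equiv 1$ on the support of $P_{i-1}$, i.e.\ at a stationary point of the Kullback--Leibler objective.

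The technical heart of the argument---and the step I expect to be the main obstacle---is bounding the second-order error to be summable; this is exactly where Condition~\ref{cond:integrable} and the specific weights enter. The plan is to control $m_{i-1}$ from below: iterating $m_i \ge (1-w_i)m_{i-1}$ yields $m_i \ge \rho_i\, m_0$ with $\rho_i = \prod_{j\le i}(1-w_j)$, and for $w_j = a(j+1)^{-1}$ one has $\rho_i \asymp i^{-a}$. A conditional Cauchy--Schwarz step bounds $g_{i-1}(x,X_i)^2/m_{i-1}(x)$ by $\int k(x\mid u)\, k(X_i\mid u)^2\, P_{i-1}(du)$, which---crucially---places the supremum over $u$ \emph{outside} the $x$-integral, so that $\E[\int m^\star r_i^2\,dx \mid \mathcal{F}_{i-1}]$ is bounded by $\rho_{i-1}^{-2}$ times precisely the quantity $\sup_u \int \{k(x\mid u)/m_0(x)\}^2 m^\star(x)\,dx$ of \eqref{eq:new.bound} (again via $\int\psi_{i-1}\,dP_{i-1}=1$). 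This is why the weaker bound \eqref{eq:new.bound}, rather than the old \eqref{eq:old.bound}, is enough. The resulting error is of order $w_i^2 \rho_{i-1}^{-2} \asymp i^{2a-2}$, whose summability requires $a$ to be small; the precise threshold in Condition~\ref{cond:weights} comes from carrying out this bookkeeping, together with the Taylor remainder, exactly.

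Finally, I would identify the limit. Condition~\ref{cond:support} makes the set of mixing distributions on $\UU$ weakly compact, so from $\liminf_i A_{i-1}=0$ one extracts a subsequence $P_{i_k}$ converging weakly to some $P_\infty$ with $\V_{P_\infty}(\psi_{P_\infty}) = 0$. Combining the stationarity $\psi_{P_\infty}\equiv 1$ on the support with the variational characterization of the minimizer $m^\dagger$ in \eqref{eq:KL.minimizer}---namely $\psi_{P^\dagger}\le 1$ on $\UU$, with equality on the support of $P^\dagger$---together with lower semicontinuity of $K(m^\star,\cdot)$ along weak convergence, would identify $\lim_k K_{i_k} = K(m^\star,m^\dagger) = \inf_{m\in\M}K(m^\star,m)$; since $K_i$ converges as a whole sequence, $K_\infty$ equals this value, and when $m^\star\in\M$ the infimum is $0$. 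I expect the subtle point to be bridging the sub-optimality $A_{i-1}$, an integral against the \emph{current} $P_{i-1}$, and the Kullback--Leibler deficit $K_{i-1}-K(m^\star,m^\dagger)$, which by convexity is controlled by $\int \psi_{i-1}\,dP^\dagger - 1$, an integral against the \emph{target} $P^\dagger$; reconciling these two measures (presumably using the full support of $P_0$ propagated through the update, plus continuity) is what makes the identification step delicate and where the compactness machinery does its real work.
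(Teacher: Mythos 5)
Your proposal is correct and follows essentially the same route as the paper's proof: the same one-step expansion of the Kullback--Leibler divergence into a first-order drift $-w_i T(P_{i-1})$ (your $\V_{P_{i-1}}(\psi_{i-1})$ is exactly the paper's functional $T$ in \eqref{eq:T}) plus a Taylor remainder, the same Robbins--Siegmund machinery, the same lower bound $m_{i-1} \geq m_0 \prod_{j=1}^{i-1}(1-w_j)$ combined with Condition~\ref{cond:integrable}, and the same subsequence/weak-compactness identification of the limit as a stationary point, hence Kullback--Leibler minimizer. The only real difference is bookkeeping: your asymmetric Cauchy--Schwarz step, which keeps $k(X_i \mid u)$ linear and integrates it out exactly via $\int \psi_{i-1}\,dP_{i-1} = 1$, gives a remainder of order $w_i^2 \prod_{j=1}^{i-1}(1-w_j)^{-2}$ instead of the paper's $w_i^2 \prod_{j=1}^{i-1}(1-w_j)^{-4}$ --- a slightly sharper bound, but one that changes nothing under Condition~\ref{cond:weights}.
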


\begin{proof}
See Appendix~\ref{SS:proof.thm1}.
\end{proof}

Here we give a very rough sketch of the proof strategy.  Start by writing $K_n = K(m^\star, m_n) - \inf_{m \in \M} K(m^\star, m)$, and let $\A_i$ denote the $\sigma$-algebra generated by the observations $X_1,\ldots,X_i$, for $i=1,2,\ldots$.  We show in the proof that 
\[ \E(K_n \mid \A_{n-1}) = K_{n-1} - w_n T(P_{n-1}) + w_n^2 \E(Z_n \mid \A_{n-1}), \quad n \geq 1, \]
where 
\begin{equation}
\label{eq:T}
T(P) = \int_\UU \Bigl\{ \int_\XX \frac{m(x)}{m_P(x)} \, k(x \mid u) \, dx \Bigr\}^2 \, P(du) - 1, 
\end{equation}
and $Z_n$ is a ``remainder'' term defined in the appendix.  It follows from Jensen's inequality that $T(P) \geq 0$, with equality if and only if $P=P^\dagger$, the Kullback--Leibler minimizer.  If we could ignore the remainder term, then $K_n$ would be a non-negative supermartingale and, therefore, would converge almost surely to some $K_\infty$.  Of course, the remainder term cannot be ignored, so we will use the ``almost supermartingale'' results in \citet{robbinssiegmund} to accommodate this.  Moreover, to show that $K_\infty$ is 0 almost surely, we will use some new and useful properties of the function $T$ in \eqref{eq:T} which were overlooked in the analysis presented in \citet{martintokdar2009}.

When the mixture model is correctly specified, so that $m^\dagger=m^\star$, it follows from Theorem~\ref{thm:mixture.KL} and the familiar properties of Kullback--Leibler divergence that $m_n \to m^\star$ almost surely in Hellinger or total variation distance, i.e., that $\int (m_n^{1/2} - m^{\star 1/2})^2 \, dx$ and $\int |m_n - m^\star| \, dx$ both go to 0 almost surely.  In the general case where the mixture model is misspecified, Theorem~\ref{thm:mixture.KL} still strongly suggests that $m_n \to m^\dagger$, but some effort is required to connect the Kullback--Leibler difference to a distance between $m_n$ and $m^\dagger$.  Towards this, define the {\em Hellinger contrast} $\rho(m_1,m_2) = \rho_{m^\star}(m_1,m_2)$, which is given by 
\[ \rho^2(m_1, m_2) = \int (m_1^{1/2} - m_2^{1/2})^2 (m^\star / m^\dagger) \, dx. \]
This is just a weighted version of the ordinary Hellinger distance---with weight function $m^\star / m^\dagger$---so it is a proper metric. Clearly, if the mixture model is correctly specified, so that $m^\dagger = m^\star$, then $\rho$ is exactly the Hellinger distance.  See \cite{patilea2001} and \cite{kleijnvaart2006} for further details on the Hellinger contrast.  The following result establishes that $\rho(m^\dagger, m_n) \to 0$ almost surely, which implies that the limit $m_\infty$ of $m_n$ satisfies $m_\infty = m^\star$ almost everywhere with respect to the measure with Lebesgue density $m^\star$.  Under some additional conditions, namely, that $m^\dagger$ is suitably close to $m^\star$, the PR estimator $m_n$ is shown to converge to $m^\dagger$ in total variation distance, which implies the limit is equal to $m^\dagger$ almost everywhere with respect to Lebesgue measure. 

\begin{cor}
\label{cor:mixture.L1}
Under the conditions of Theorem~\ref{thm:mixture.KL}, $\rho(m^\dagger, m_n) \to 0$ almost surely.  Moreover, if $m^\dagger/m^\star \in L_\infty(m^\star)$, then $m_n \to m^\dagger$ almost surely in total variation. 
\end{cor}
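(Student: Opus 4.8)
The plan is to derive the corollary directly from Theorem~\ref{thm:mixture.KL} by sandwiching the Hellinger contrast between zero and the Kullback--Leibler gap $K(m^\star,m_n)-K(m^\star,m^\dagger)$, which the theorem drives to zero almost surely. Two ingredients are needed. The first is a first-order optimality (variational) inequality for the minimizer $m^\dagger$: since $\M$ is convex in $P$ (mixing is linear in $P$) and $m^\dagger$ minimizes the convex map $m\mapsto K(m^\star,m)$ over $\M$, differentiating $K(m^\star,(1-\epsilon)m^\dagger+\epsilon m)$ at $\epsilon=0$ along the segment toward any $m\in\M$ gives $\int (m/m^\dagger)\,m^\star\,dx\le 1$. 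Because $m_n=m_{P_n}\in\M$, this applies with $m=m_n$. The second ingredient is the elementary bound $\log y\le y-1$.

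For the first claim, write the gap as $K(m^\star,m_n)-K(m^\star,m^\dagger)=\int\log(m^\dagger/m_n)\,m^\star\,dx$, and apply $\log y\le y-1$ with $y=(m_n/m^\dagger)^{1/2}$ to obtain
\[ K(m^\star,m_n)-K(m^\star,m^\dagger)\ge 2\Bigl\{1-\int (m_n/m^\dagger)^{1/2}\,m^\star\,dx\Bigr\}. \]
On the other hand, expanding $\rho^2(m^\dagger,m_n)=\int\bigl(1-(m_n/m^\dagger)^{1/2}\bigr)^2 m^\star\,dx$ and invoking the variational inequality $\int (m_n/m^\dagger)\,m^\star\,dx\le 1$ yields $\rho^2(m^\dagger,m_n)\le 2\{1-\int (m_n/m^\dagger)^{1/2}\,m^\star\,dx\}$. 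Chaining the two bounds gives $0\le\rho^2(m^\dagger,m_n)\le K(m^\star,m_n)-K(m^\star,m^\dagger)$, whose right-hand side tends to zero almost surely by Theorem~\ref{thm:mixture.KL}; hence $\rho(m^\dagger,m_n)\to 0$ almost surely.

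For the second claim, I would pass from the weighted Hellinger contrast to the total-variation distance by Cauchy--Schwarz. Factoring $|m_n-m^\dagger|=|m_n^{1/2}-(m^\dagger)^{1/2}|\,(m_n^{1/2}+(m^\dagger)^{1/2})$ and inserting the weights $(m^\star/m^\dagger)^{1/2}$ and $(m^\dagger/m^\star)^{1/2}$ gives
\[ \int |m_n-m^\dagger|\,dx\le \rho(m^\dagger,m_n)\,\Bigl\{\int (m_n^{1/2}+(m^\dagger)^{1/2})^2\,(m^\dagger/m^\star)\,dx\Bigr\}^{1/2}. \]
Using $(m_n^{1/2}+(m^\dagger)^{1/2})^2\le 2(m_n+m^\dagger)$ together with the hypothesis $m^\dagger/m^\star\in L_\infty(m^\star)$, say $m^\dagger\le M m^\star$, bounds the bracketed factor by $4M$, so that $\int|m_n-m^\dagger|\,dx\le 2\sqrt{M}\,\rho(m^\dagger,m_n)\to 0$ almost surely.

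The main obstacle, and the step requiring the most care, is twofold. First, the variational inequality must be justified rigorously: one needs convexity of $\M$, finiteness of $K(m^\star,\cdot)$ along the segment, and an interchange of differentiation and integration, the latter handled by monotone or dominated convergence exploiting the convexity of $y\mapsto \log(1/y)$. Second, the Cauchy--Schwarz bound for total variation is clean only on $\{m^\star>0\}$, since $\rho$ assigns zero weight to $\{m^\star=0\}$; completing the argument there requires that $m^\dagger$ vanish almost everywhere off the support of $m^\star$, which is precisely what the boundedness hypothesis $m^\dagger/m^\star\in L_\infty(m^\star)$ secures, after which a mass-accounting argument---using $\int m_n\,dx=\int m^\dagger\,dx=1$ and the established convergence on $\{m^\star>0\}$---forces the contribution from $\{m^\star=0\}$ to vanish as well.
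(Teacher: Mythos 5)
Your proof is correct and is essentially the argument underlying the proof the paper invokes by citation (Corollary 4.10 of Martin and Tokdar, 2009, which rests on the Patilea/Kleijn--van der Vaart inequality for KL minimizers over convex classes): the first-order optimality condition $\int (m/m^\dagger)\,m^\star\,dx \le 1$ for $m \in \M$ yields $\rho^2(m^\dagger,m_n) \le K(m^\star,m_n)-K(m^\star,m^\dagger)$, and Cauchy--Schwarz together with the boundedness hypothesis upgrades this to total variation. The one point to phrase carefully is the set $\{m^\star=0\}$: strictly, membership in $L_\infty(m^\star)$ says nothing there, so the hypothesis should be read as $m^\dagger \le M m^\star$ Lebesgue-almost everywhere (forcing $m^\dagger$ to vanish off the support of $m^\star$), which is exactly the reading your mass-accounting step uses and the one needed for the statement to hold.
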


\begin{proof}
See the proof of Corollary~4.10 in \citet{martintokdar2009}.
\end{proof}

Finally, what can be said about the convergence of the mixing distribution estimator, $P_n$?  Again, Theorem~\ref{thm:mixture.KL} strongly suggests that $P_n$ is converging to $P^\dagger$ in some sense, but we cannot make that leap immediately.  In particular, without additional assumptions, there is no guarantee that $P^\dagger$ is unique or even that $P_n$ converges at all.  For this, we will need {\em identifiability} of the mixture model \eqref{mixture} and tightness of $(P_n)$. Under Condition~2, as we assume here, tightness of $P_n$ follows from Prokhorov's theorem.  If compactness of $\UU$ is not a feasible assumption, then one can instead verify the more general sufficient condition, namely, Condition~A6 in \citet{martintokdar2009}, for tightness of $P_n$. 

We will also reqjuire the following fairly abstract condition on the kernel density $k$, written in terms of a general sequence of mixing distributions $(Q_t)$ on $\UU$:
\begin{equation}
\label{eq:kernel}
\text{$Q_t \to Q_\infty$ weakly implies $m_{Q_t}(x) \to m_{Q_\infty}(x)$ for almost all $x$}. 
\end{equation}
In words, \eqref{eq:kernel} states that the kernel is such that weak convergence of mixing distributions implies almost everywhere pointwise convergence of mixture densities.  This holds immediately if $u \mapsto k(x \mid u)$ is bounded and continuous for almost all $x$, as was assumed in \citet{martintokdar2009} and elsewhere.  However, in some examples, like in Section~\ref{monotone} below, strict continuity of the kernel fails, but condition \eqref{eq:kernel} can be verified.  

\begin{cor}
\label{cor:mixing}
In addition to the conditions of Theorem~\ref{thm:mixture.KL}, assume that
\begin{itemize}
\item the model is identifiable, i.e., $m_P = m_{P'}$ almost everywhere implies $P = P'$,
\item the kernel is such that \eqref{eq:kernel} holds, 
\item and $m^\dagger/m^\star \in L_\infty(m^\star)$. 
\end{itemize}
Then the Kullback--Leibler minimizer $P^\dagger$ is unique and $P_n \to P^\dagger$ weakly almost surely. 
\end{cor}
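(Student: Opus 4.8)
The plan is to first establish uniqueness of $P^\dagger$ and then promote the density convergence of Corollary~\ref{cor:mixture.L1} to weak convergence of the mixing distributions through a subsequence argument. For uniqueness, note that $K(m^\star, m) = \int m^\star \log m^\star \, dx - \int m^\star \log m \, dx$; the first term is constant and $m \mapsto -\int m^\star \log m \, dx$ is strictly convex on the convex class $\M$ since $-\log$ is strictly convex, so the minimizer $m^\dagger$ of \eqref{eq:KL.minimizer} is unique. Identifiability then yields uniqueness of $P^\dagger$: any $P \in \overline{\P}$ with $m_P = m^\dagger$ almost everywhere must equal $P^\dagger$.

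For the weak convergence, I would fix the probability-one event on which $m_n \to m^\dagger$ in total variation, as guaranteed by the second conclusion of Corollary~\ref{cor:mixture.L1} (which applies precisely because $m^\dagger/m^\star \in L_\infty(m^\star)$ is assumed), and argue along a fixed sample path in this event. Since $\UU$ is compact by Condition~\ref{cond:support}, the space of Borel probability measures on $\UU$ is weakly sequentially compact, so every subsequence of $(P_n)$ admits a further subsequence $(P_{n_k})$ converging weakly to some limit $P_\infty$.

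The crux is to show $P_\infty = P^\dagger$. The kernel condition \eqref{eq:kernel} turns $P_{n_k} \to P_\infty$ weakly into $m_{n_k}(x) = m_{P_{n_k}}(x) \to m_{P_\infty}(x)$ for almost every $x$. Meanwhile, total variation convergence $m_n \to m^\dagger$ gives $L_1$ convergence along the same subsequence, hence $m_{n_k} \to m^\dagger$ almost everywhere along a further subsequence. Equating the two almost-everywhere limits yields $m_{P_\infty} = m^\dagger$ almost everywhere, and identifiability forces $P_\infty = P^\dagger$.

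Because every weakly convergent subsequence of the tight sequence $(P_n)$ has the same limit $P^\dagger$, a standard subsequence argument gives $P_n \to P^\dagger$ weakly, and since the controlling event has probability one, this holds almost surely. I expect the main obstacle to be the reconciliation in the previous paragraph: condition \eqref{eq:kernel} supplies a Lebesgue-almost-everywhere pointwise limit of the densities while Corollary~\ref{cor:mixture.L1} supplies a total variation limit, and these two modes must be matched by passing to a common almost-everywhere convergent subsequence and carefully absorbing the exceptional null sets hidden in the ``almost all $x$'' qualifier of \eqref{eq:kernel} before identifiability can be invoked. The accompanying subsequence-of-subsequence bookkeeping is where the care is needed, but it is routine once the matching of limits is set up correctly.
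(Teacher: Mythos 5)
Your proposal is correct and takes essentially the same route as the paper's proof: tightness of $(P_n)$ from compactness of $\UU$, extraction of a weakly convergent subsequence, condition \eqref{eq:kernel} to obtain pointwise convergence of the mixture densities, reconciliation with the total-variation limit $m^\dagger$ supplied by Corollary~\ref{cor:mixture.L1}, identifiability to force the weak limit to be $P^\dagger$, and the standard subsequence principle to pass to the full sequence. The one difference is at the reconciliation step you flag as delicate: the paper sidesteps the sub-subsequence bookkeeping by applying Scheff\'e's theorem to upgrade the almost-everywhere convergence of $m_{n(t)}$ to $m_{P_\infty}$ into total-variation convergence, so that $m_{P_\infty} = m^\dagger$ follows directly from uniqueness of $L_1$ limits rather than from matching two almost-everywhere limits along a further subsequence.
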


\begin{proof}
Since $P_n$ is tight, there exists a subsequence $P_{n(t)}$ such that $P_{n(t)} \to P_\infty$ weakly, for some $P_\infty$.  By \eqref{eq:kernel}, we have pointwise convergence of the mixture densities, i.e., $m_{n(t)}(x) \to m_\infty(x)$ for almost all $x$, and then $m_{n(t)} \to m_\infty$ in total variation distance thanks to Scheff\'e's theorem.  But Corollary~\ref{cor:mixture.L1} already gives us $m_n \to m^\dagger$ almost surely in total variation distance on the full/original sequence.  Therefore, it must be that $m_\infty = m^\dagger$ almost surely and, by identifiability, that $P_\infty = P^\dagger$.  Since any such convergent subsequence of $P_n$ would have the same almost weak limit, $P^\dagger$, it must be that $P_n$ itself converges weakly almost surely to $P^\dagger$, as claimed. 
\end{proof}

The boundedness assumption on $m^\dagger/m^\star$, as in Corollary~\ref{cor:mixture.L1}, is needed simply to convert convergence of $m_n$ to $m^\dagger$ in the Hellinger contrast to convergence in total variation.  Identifiability of the mixture model $m_P$ in $P$ is non-trivial. Additively-closed one-parameter families of distributions were proved to be identifiable in \citet{teicher1961}. Identifiability of finite mixtures of gamma and of Gaussian distributions was proved in \citet{teicher1963}. Scale mixtures of uniform distributions, like we discuss in Section~\ref{monotone} below, were shown to be identifiable in \citet{williamson1955}. More generally, identifiability of mixture models needs to be checked on a case-by-case basis.

\section{Application: Monotone density estimation}
\label{monotone}

\subsection{Background}

Any monotone non-increasing density can be written as a scale mixture of uniforms \citep{williamson1955}, i.e., for any monotone density $m$ defined on $\XX = [0, \infty)$, there exists a mixing distribution $P$, supported on $\UU = [0,\infty)$, such that,
\begin{equation}
\label{unif_mix}
   m(x) = \int_0^\infty \unif(x \mid 0, u) \, P(du),
\end{equation}
where $\unif(x \mid 0, u) = u^{-1} 1_{[0,u]}(x)$ is the uniform kernel density.  Therefore, the problem of estimating a monotone density can, at least in principle, be solved through the use of mixture density estimation methods, such as the PR algorithm.  

Let $X_1, \ldots, X_n$ be iid from a monotone non-increasing density $m^\star$. One approach to estimating $m^\star$ is to calculate the nonparametric MLE, also known as the {\em Grenander estimator} \citep{grenander1956}, which is the left derivative of the least concave majorant of the empirical distribution function. It is known that Grenander's is a consistent estimator of $m^\star$, with consistency results obtained in \citet{rao1969} and \citet{groeneboom1985}.  However, as shown in, e.g., \citet{woodroofesun1993}, the Grenander estimator tends to over-estimate near the origin and, in particular, is inconsistent at the origin. The same authors proposed a penalized likelihood estimator that penalizes the Grenander estimator at the origin and is also consistent overall.  

Another approach is Bayesian, whereby a prior distribution on $m$ is imposed by using the mixture characterization in \eqref{unif_mix} along with a suitable prior on the mixing distribution $P$.  A natural choice is a Dirichlet process prior on $P$, leading to a Dirichlet process mixture of uniforms model for the density $m$; see \citet{bornkamp2009}. Although this approach seems straightforward, obtaining asymptotic consistency results for the posterior distribution is made difficult by the uniform kernel's varying support.  In particular, if the support for the mixing distribution is not suitably chosen, then the Kullback--Leibler divergence of a posited mixture model from the true density would be infinite, which creates problems for verifying the so-called ``Kullback--Leibler property'' \citep{schwartz1965, wu.ghosal.2008} in the classical Bayesian consistency theory.  Some strategies have been suggested in, e.g., \citet{salomond2014}, who showed that the Bayesian posterior distribution under the Dirichlet process mixture prior has a near optimal concentration rate in total variation.  More recently, \citet{martin2019} proposed the use of an empirical, or data-driven prior for which the prior support conditions required for asymptotic consistency are automatically satisfied, and showed that the corresponding empirical Bayes posterior distribution concentrates around the true monotone density at nearly optimal minimax rate.  But the fully Bayesian solutions are computationally non-trivial and somewhat time consuming; moreover, the estimates tend to be relatively rough.  The PR algorithm, which is computationally fast and tends to produce smooth estimates, is a natural alternative to the aforementioned likelihood-based methods.  

\subsection{PR for uniform mixtures}


Suppose that the true density $m^\star$ is any monotone density supported on $[0, \infty)$. We know that $m^\star$ can be written as a mixture in \eqref{unif_mix}, so there exists a mixing distribution $P^\star$, which is also supported on $[0, \infty)$.  This point is relevant because of the following unique feature of uniform mixtures: if $m_P$ is a mixture model as in \eqref{unif_mix} with $P$ supported on $[0,L)$, then $m_P(x) = 0$ for all $x > L$ and, hence, if $L < \infty$, then $K(m^\star, m_P) \equiv \infty$.  Therefore, the upper bound of $\UU$ being $\infty$ creates some serious challenges. 
For practical implementation of the PR algorithm, and for the theory as discussed above, a compact mixing distribution support is needed.  This calls for a different approach.  

For a fixed $L \in (0,\infty)$, define a new target, $m^{\star L}$, which is simply $m^\star$ restricted and renormalized to $[0,L)$.  That is, if $M^\star$ denotes the distribution function corresponding to the density $m^\star$, then 
\[ m^{\star L}(x) = \frac{m^\star(x) \, 1_{[0,L]}(x)}{M^\star(L)}. \]
Alternatively, $m^{\star L}$ can be viewed as the conditional density of $X$, given $X \leq L$; see below. The point of this adjustment is that $m^{\star L}$ has a known and bounded support, so a mixture model with mixing distribution supported on (a large subset of) $[0,L)$ can be fit with the PR algorithm to efficiently and accurately estimate this new target $m^{\star L}$.  Note that $m^{\star L}$ can be made arbitrarily close to $m^\star$ by choosing $L$ sufficiently large (see below), so this modification has no practical consequences. 








For technical and practical reasons, we cannot use the PR algorithm when the support of the mixing distribution contains $u=0$, so we introduce a new lower bound $\ell \in (0,L)$, which can be arbitrarily small.  Then the proposed mixture model to be fit by PR is 
\begin{equation}
\label{eq:monotone}
m_P(x) = \int_\UU \unif(x \mid 0, u) \, P(du), \quad x \in [0,L], \quad \UU = [\ell, L].  
\end{equation}
While both $m_P$ above and the adjusted target $m^{\star L}$ are supported on $[0,L]$, the model in \eqref{eq:monotone} is still {\em slightly misspecified} through the introduction of the lower bound $\ell > 0$ of the mixing distribution support.  In particular, note that $m_P(x)$ is constant for $x \in [0,\ell]$.  But the fact that $\ell$ can be taken arbitrarily small means that there are no practical consequences to this misspecification.  It does complicate the convergence analysis, but, fortunately, the theory presented in Section~\ref{results} above is general enough to handle this. 

Given that the mixture model \eqref{eq:monotone} is slightly misspecified, it is important to know what we can expect the PR algorithm to do. Theorem~\ref{thm:mixture.KL} states that, roughly, the PR estimator $m_n$ will converge to the Kullback--Leibler minimizer $m^\dagger$.  Since the supports of $m^{\star L}$ and the model densities $m_P$ in \eqref{eq:monotone} are the same, we avoid the ``$K(m^{\star L}, m_P) \equiv \infty$'' problem so minimizing the Kullback--Leibler divergence is well-defined.  To understand the bias coming from model misspecification, it will be important to understand what $m^\dagger$ looks like.  Incidentally, \citet{williamson1955} established that uniform mixtures are identifiable, so there is a unique mixing distribution, $P^\dagger$, supported on $\UU$, at which the Kullback--Leibler divergence is attained.  The following lemma gives the details.

\begin{lem}
\label{lem:KL.min1}
For the targeted monotone density $m^{\star L}$ supported on $[0,L]$, if the proposed mixture model is as in \eqref{eq:monotone}, then the unique minimizer, $P^\dagger = P^{\dagger \ell,L}$, of the Kullback--Leibler divergence $P \mapsto K(m^{\star L}, m_P)$ is given by 
\begin{equation}
\label{eq:mono.P.dagger}
P^\dagger = a_\ell \, \delta_{\{\ell\}} + a_\UU \, P^\star|_\UU + a_L \, \delta_{\{L\}}, 
\end{equation}
where $\delta_{\{t\}}$ is the Dirac point-mass at $t$, $P^\star|_\UU$ is $P^\star$ restricted to $\UU = [\ell,L]$, and the coefficients are given by 
\[ a_\ell = \frac{P^\star([0,\ell])}{M^\star(L)}, \quad a_\UU = \frac{P^\star([0,L])}{M^\star(L)}, \quad a_L = \frac{L m^\star(L)}{M^\star(L)}, \] 
with $M^\star$ the distribution function corresponding to $m^\star$. 
Then the best approximation of $m^{\star L}$ under model \eqref{eq:monotone} is $m^\dagger = m_{P^\dagger}$, given by 
\begin{equation}
\label{eq:mono.m.dagger}
m^\dagger(x) = a_\ell \, \unif(x \mid 0, \ell) + a_\UU \int_\UU \unif(x \mid 0, u) \, P^\star(du) + a_L \, \unif(x \mid 0, L). 
\end{equation}
\end{lem}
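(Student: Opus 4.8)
The plan is to characterize $P^\dagger$ through the standard first-order (variational) condition for a Kullback--Leibler projection onto the convex mixture class $\M$, and then to verify that the proposed measure satisfies it. Since $P \mapsto m_P$ is linear and $m \mapsto K(m^{\star L}, m)$ is strictly convex, a mixing distribution supported on $\UU = [\ell, L]$ minimizes $K(m^{\star L}, m_P)$ if and only if the Gateaux derivative of $P \mapsto K(m^{\star L}, m_P)$ at $P^\dagger$ is nonnegative in every feasible direction. Writing $m^\dagger = m_{P^\dagger}$ and differentiating along the path $(1-t)P^\dagger + t\,\delta_{\{u\}}$ at $t=0$, this reduces to the pointwise condition
\[
\psi(u) := \int_0^L \frac{\unif(x \mid 0,u)}{m^\dagger(x)} \, m^{\star L}(x) \, dx \leq 1, \quad u \in \UU,
\]
with equality for $P^\dagger$-almost every $u$. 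This is the characterization I would establish first; the only care needed is to confirm that $m^\dagger$ is positive throughout the support of $m^{\star L}$, so the ratio is well defined, which will follow from the explicit form of $m^\dagger$ computed next.

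Next I would compute $m^\dagger = m_{P^\dagger}$ explicitly on the three regions $[0,\ell]$, $(\ell, L)$, and $(L, \infty)$, using the Williamson representation $m^\star(x) = \int_{[x,\infty)} u^{-1} \, P^\star(du)$ underlying \eqref{unif_mix}. The key auxiliary identity is $M^\star(t) = P^\star([0,t]) + t\, m^\star(t)$, obtained by applying Fubini to $M^\star(t) = \int_0^t m^\star(x)\,dx$; taking $t=L$ verifies that the three coefficients make $P^\dagger$ a probability measure, and taking $t=\ell$ feeds the computation on $[0,\ell]$. The upshot is that $m^\dagger$ coincides exactly with the target $m^{\star L}$ on $(\ell, L)$, while on $[0,\ell]$ it equals the positive constant $m^\dagger \equiv M^\star(\ell)/(\ell\, M^\star(L))$, reflecting that the lower truncation at $\ell$ forces the model density to be flat there. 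This simultaneously yields formula \eqref{eq:mono.m.dagger} and secures positivity wherever $m^{\star L}>0$.

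With $m^\dagger$ in hand I would verify the variational condition by direct calculation. Writing $\psi(u) = u^{-1}\int_0^u m^{\star L}(x)/m^\dagger(x)\,dx$ and splitting at $\ell$: on $(\ell, u)$ the integrand is $1$ because $m^\dagger = m^{\star L}$ there, contributing $u - \ell$; on $(0,\ell)$ the integrand is $m^{\star L}(x)/C$ with $C$ the constant above, and the identity $\int_0^\ell m^{\star L}(x)\,dx = M^\star(\ell)/M^\star(L)$ together with the value of $C$ gives exactly $\ell$. Hence $\psi(u) \equiv 1$ on all of $\UU$, so the inequality holds with equality everywhere, confirming that $P^\dagger$ is a minimizer. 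Uniqueness of $m^\dagger$ follows from strict convexity of $K(m^{\star L}, \cdot)$, and uniqueness of the mixing distribution $P^\dagger$ from the identifiability of scale mixtures of uniforms established in \citet{williamson1955}.

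I expect the main obstacle to be the first paragraph: rigorously justifying the variational characterization. Differentiating $K(m^{\star L}, m_{(1-t)P^\dagger + tQ})$ under the integral at $t=0$ requires a dominated-convergence argument, and one must rule out the degenerate possibility that $m^\dagger$ vanishes on part of the support of $m^{\star L}$, which would make both the divergence and its derivative ill-behaved. Once the characterization and the positivity of $m^\dagger$ are in place, the remaining steps are routine computations built on the Williamson representation and the Fubini identity $M^\star(t) = P^\star([0,t]) + t\,m^\star(t)$.
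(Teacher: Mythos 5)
Your proposal is correct and takes essentially the same route as the paper: a first-order (Gateaux derivative) optimality condition for the convex functional $P \mapsto K(m^{\star L}, m_P)$, verified by splitting the integral at $\ell$, using that $m^\dagger = m^{\star L}$ on $(\ell, L]$, that uniform mixtures supported on $[\ell,L]$ are constant on $[0,\ell]$, and the Fubini identity $M^\star(t) = P^\star([0,t]) + t\,m^\star(t)$, with uniqueness from convexity plus Williamson's identifiability. The only cosmetic difference is that you differentiate along point-mass directions $\delta_{\{u\}}$ and verify the pointwise condition $\psi \equiv 1$ on $\UU$, whereas the paper differentiates toward an arbitrary distribution $H$ and shows that derivative vanishes---equivalent formulations, since the paper's derivative is exactly $1 - \int \psi \, dH$.
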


\begin{proof}
See Appendix~\ref{SS:proof.lemma1}. 
\end{proof}


The characterization result in Lemma~\ref{lem:KL.min1} is intuitive.  There is a true $P^\star$ that characterizes the true monotone mixture density $m^\star$, both generally supported on $[0,\infty)$.  Our proposed model, however, effectively restricts the mixing distribution's support to $[\ell,L]$, so it makes sense that the best approximation would agree with $P^\star$ on $[\ell,L]$ and then suitably allocate the remaining mass to the endpoints $\ell$ and $L$. 

From Section~\ref{pr}, recall that the implementation of the PR algorithm begins with an initial guess $P_0$, and that this effectively determines the dominating measure with respect to which $P_n$ has a density. PR's ability to choose the underlying dominating measure comes in handy in cases like this where we know that the target mixing distribution, $P^\dagger$, has an ``unusual'' dominating measure.  From Lemma \ref{lem:KL.min1}, we know that the best mixing distribution for fitting mixture model \eqref{eq:monotone} to $m^{\star L}$ puts point masses at the endpoints, $\ell$ and $L$, of $\UU$, and has a density with respect to Lebesgue measure on the interior of $\UU$. So, naturally, we can initialize the PR algorithm with a starting guess $P_0$ that has a density with respect to the dominating measure $\delta_{\{\ell\}} + \lambda_{\UU} + \delta_{\{L\}}$, where $\lambda_\UU$ denotes Lebesgue measure on $\UU$. Specifically, our proposal is to initialize the PR algorithm at 
\[ P_0 = p_{0,\ell} \, \delta_{\{\ell\}} + (1 - p_{0,\ell} - p_{0,L}) \, P_{0,\UU} + p_{0,L} \, \delta_{\{L\}}, \]
where $p_{0,\ell}$ and $p_{0,L}$ are positive with sum strictly less than 1, and $P_{0,\UU}$ has a density with respect to Lebesgue measure, e.g., $P_{0,\UU}$ could just be a uniform distribution on $\UU$.  Then the estimate, $P_n$, after the $n^\text{th}$ iteration will have the same form
\[ P_n = p_{n,\ell} \, \delta_{\{\ell\}} + (1 - p_{n,\ell} - p_{n,L}) \, P_{n,\UU} + p_{n,L} \, \delta_{\{L\}}, \]
and the corresponding mixture density estimate, $m_n$, is obtained as usual by integrating the kernel with respect to the mixing distribution $P_n$.


\subsection{Theoretical results}

Now that we know what the PR algorithm ought to converge to, we are ready to state our main result of this section.  First, a word about the notation/terminology that follows.  In our previous results, when we wrote ``almost surely,'' this was referring to the law that corresponds to iid sampling from $m^\star$.  In the results below, $m^{\star L}$ is the target, so we will write ``$m^{\star L}$-almost surely'' to be clear that it is with respect to the law corresponding to iid sampling from $m^{\star L}$.  Recall that $m^{\star L}$ is the conditional density of $X$, given $X \leq L$, so this modified law can be interpreted as iid sampling from $m^\star$, but throwing away any data points that exceed $L$.  Again, since $L$ can be taken arbitrarily large, there are no practical consequences of this restriction.  In fact, a bound on the bias induced by both the $L$- and $\ell$-restrictions is given in Proposition~\ref{prop:bias} below. 

\begin{thm}
\label{thm:monotone}
If $m^\star$ is renormalized to $m^{\star L}$ supported on $[0,L]$, and if the proposed mixture model $m_{P}$ is as in \eqref{eq:monotone}, then the PR estimator $m_n$ satisfies 
\[ K(m^{\star L}, m_n) \to K(m^{\star L}, m^\dagger), \quad \text{$m^{\star L}$-almost surely} \]
where $m^\dagger$ is as given in Lemma~\ref{lem:KL.min1}. Moreover, $m_n$ converges $m^{\star L}$-almost surely to $m^\dagger$ in total variation distance and the mixing distribution estimates $P_n$ converges weakly $m^{\star L}$-almost surely to $P^\dagger$ in \eqref{eq:mono.P.dagger}. 
\end{thm}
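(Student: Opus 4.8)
The plan is to obtain all three conclusions by applying the general theory of Section~\ref{results} with $m^{\star L}$ playing the role of the ``true'' density $m^\star$; that is, we regard the data as iid from $m^{\star L}$ (equivalently, iid from $m^\star$ with observations exceeding $L$ discarded) and fit the mixture model \eqref{eq:monotone}. Under this identification, Theorem~\ref{thm:mixture.KL}, Corollary~\ref{cor:mixture.L1}, and Corollary~\ref{cor:mixing} deliver exactly the first, second, and third assertions, respectively, with every ``almost surely'' now read as ``$m^{\star L}$-almost surely.'' It therefore remains only to verify the hypotheses of these three results in the present uniform-mixture setting. Condition~\ref{cond:weights} holds by the user's choice of weights and Condition~\ref{cond:support} holds because $\UU = [\ell,L]$ is compact; moreover the non-vacuity requirement $\inf_{m \in \M} K(m^{\star L}, m) < \infty$ is met because $m^{\star L}$ and every model density in \eqref{eq:monotone} share the common support $[0,L]$. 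The substance of the proof is thus the verification of the integrability Condition~\ref{cond:integrable}, the boundedness requirement $m^\dagger / m^{\star L} \in L_\infty(m^{\star L})$, and the kernel condition \eqref{eq:kernel}.

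For Condition~\ref{cond:integrable}, the key observation is that the chosen initial guess $P_0$ places a point mass $p_{0,L} > 0$ at the right endpoint $L$, so that $m_0(x) \geq p_{0,L} \, \unif(x \mid 0, L) = p_{0,L}/L$ for every $x \in [0,L]$; hence $1/m_0$ is bounded on $[0,L]$. Simultaneously, the lower bound $\ell > 0$ forces the uniform kernel to be bounded, $k(x \mid u) = u^{-1} 1_{[0,u]}(x) \leq \ell^{-1}$ for all $u \in \UU$. Consequently the integrand in \eqref{eq:new.bound} is bounded uniformly in $u \in \UU$ and $x \in [0,L]$, and since $m^{\star L}$ is a probability density supported on $[0,L]$, the integral is finite uniformly in $u$. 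Thus Condition~\ref{cond:integrable} holds, Theorem~\ref{thm:mixture.KL} applies, and $K(m^{\star L}, m_n) \to \inf_{m \in \M} K(m^{\star L}, m)$, which equals $K(m^{\star L}, m^\dagger)$ by Lemma~\ref{lem:KL.min1}; this is the first assertion.

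For the total-variation conclusion I would next check $m^\dagger / m^{\star L} \in L_\infty(m^{\star L})$, the sole extra hypothesis of Corollary~\ref{cor:mixture.L1}. Monotonicity of $m^\star$ gives the positive lower bound $m^{\star L}(x) \geq m^\star(L)/M^\star(L) > 0$ on $[0,L]$ (positivity of $m^\star(L)$ being implicit in Lemma~\ref{lem:KL.min1} through $a_L$), while the explicit form \eqref{eq:mono.m.dagger} shows $m^\dagger$ is bounded above on $[0,L]$, since each of its three terms is bounded once $\ell > 0$ (the endpoint terms equal the constants $a_\ell/\ell$ and $a_L/L$, and the integral term is at most $\ell^{-1}$). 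Hence the ratio is bounded and Corollary~\ref{cor:mixture.L1} yields $m_n \to m^\dagger$ in total variation, $m^{\star L}$-almost surely.

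Finally, for the weak convergence of $P_n$ I would invoke Corollary~\ref{cor:mixing}, whose hypotheses are identifiability (which holds for scale mixtures of uniforms by \citet{williamson1955}), the boundedness of $m^\dagger/m^{\star L}$ just established, and the kernel condition \eqref{eq:kernel}. Verifying \eqref{eq:kernel} is the main obstacle, because the uniform kernel is discontinuous in $u$, so the usual ``bounded continuous kernel'' shortcut fails and a direct argument is needed. For fixed $x$, writing $m_Q(x) = \int_\UU u^{-1} 1_{[x,L]}(u) \, Q(du)$, the integrand $u \mapsto u^{-1} 1_{[x,L]}(u)$ is bounded by $\ell^{-1}$ on $\UU$ and has at most a single discontinuity, at $u = x$. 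If $Q_t \to Q_\infty$ weakly and $Q_\infty(\{x\}) = 0$, then the portmanteau theorem gives $m_{Q_t}(x) \to m_{Q_\infty}(x)$. Since the probability measure $Q_\infty$ has at most countably many atoms, this convergence holds for all but countably many $x$, hence for Lebesgue-almost all $x$, establishing \eqref{eq:kernel}. With all hypotheses in place, Corollary~\ref{cor:mixing} furnishes the unique minimizer $P^\dagger$ of \eqref{eq:mono.P.dagger} and gives $P_n \to P^\dagger$ weakly, $m^{\star L}$-almost surely, completing the proof.
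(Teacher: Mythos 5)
Your proposal is correct and follows essentially the same route as the paper's proof: verify Conditions~\ref{cond:weights}--\ref{cond:integrable} (using the point mass $p_{0,L}$ at $L$ to bound $m_0$ below by $p_{0,L}/L$ and the lower endpoint $\ell$ to bound the kernel above by $\ell^{-1}$), obtain total variation convergence from Corollary~\ref{cor:mixture.L1} via boundedness of $m^\dagger/m^{\star L}$, and invoke Corollary~\ref{cor:mixing} with Williamson's identifiability plus a verification of \eqref{eq:kernel}. The only difference is organizational: where the paper verifies \eqref{eq:kernel} by a case split ($x \in (\ell,L]$ versus the constant region $x \in [0,\ell]$, the latter treated separately because a convergence failure there would occupy positive Lebesgue measure), you give a unified mapping-theorem argument whose exceptional set is the countable set of atoms of $Q_\infty$---this is valid and implicitly resolves the paper's constancy concern, since no point of $[0,\ell)$ can be an atom of (or a kernel discontinuity point charged by) a limit measure supported on $[\ell,L]$, so convergence on $[0,\ell]$ is automatic.
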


\begin{proof}
See Appendix~\ref{SS:proof.thm2}. 
\end{proof}

Our choice to restrict the mixing distribution's support to $\UU = [\ell, L]$ introduces some bias.  That is, the limit $m^{\dagger}$ of the sequence of PR estimators is the Kullback--Leibler minimizer over all mixtures supported on $\UU = [\ell, L]$, but it is different from $m^{\star L}$, which is different from $m^\star$. Intuitively, if $\ell \approx 0$ and $L \approx \infty$, then the bias ought to be negligible.  The next result confirms this intuition by bounding the bias as a function of $(\ell, L)$.  

\begin{prop}
\label{prop:bias}
The $L_1$ distance between the true monotone density $m^\star$ and the best approximation $m^\dagger$ in \eqref{eq:mono.m.dagger} under the restricted model \eqref{eq:monotone} is bounded as 
\begin{equation}
\label{eq:bias}
\int |m^{\dagger}(x) - m^\star(x)| \,dx \leq 2 \bigl\{ 1 - M^\star(L) + M^\star(L)^{-1} P^\star([0,\ell]) \bigr\}. 
\end{equation}
\end{prop}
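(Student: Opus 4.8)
The plan is to bound the target distance by the triangle inequality through the intermediate density $m^{\star L}$,
\[
\int |m^\dagger - m^\star|\,dx \le \int |m^\dagger - m^{\star L}|\,dx + \int |m^{\star L} - m^\star|\,dx,
\]
and treat the two pieces separately. The second piece is exact and elementary: since $m^{\star L} = M^\star(L)^{-1} m^\star$ on $[0,L]$ and $m^{\star L} = 0$ on $(L,\infty)$, I would split the integral at $L$ and use $\int_0^L m^\star = M^\star(L)$ together with $\int_L^\infty m^\star = 1 - M^\star(L)$ to get $\int |m^{\star L}-m^\star|\,dx = (1-M^\star(L)) + (1-M^\star(L)) = 2\,(1-M^\star(L))$. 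Everything then reduces to showing $\int |m^\dagger - m^{\star L}|\,dx \le 2\,M^\star(L)^{-1} P^\star([0,\ell])$.

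For this remaining piece I would use the explicit representations from Lemma~\ref{lem:KL.min1}. Writing $m^\star_S(x) = \int_S \unif(x\mid0,u)\,P^\star(du)$ and decomposing $P^\star$ over the three regions $[0,\ell)$, $[\ell,L]$, and $(L,\infty)$, one has $m^{\star L} = M^\star(L)^{-1}\bigl(m^\star_{[0,\ell)} + m^\star_{[\ell,L]} + m^\star_{(L,\infty)}\bigr)$ on $[0,L]$, while \eqref{eq:mono.m.dagger} reads
\[
m^\dagger = a_\ell\,\unif(\cdot\mid0,\ell) + M^\star(L)^{-1} m^\star_{[\ell,L]} + a_L\,\unif(\cdot\mid0,L).
\]
Both densities are supported on $[0,L]$, and two cancellations occur there. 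The interior terms agree outright. The crucial cancellation is at the upper endpoint: for $x \le L$ every $u > L$ satisfies $1_{[0,u]}(x)=1$, so $m^\star_{(L,\infty)}(x) = \int_{(L,\infty)} u^{-1}\,P^\star(du)$ is constant in $x$ and equals $m^\star(L)$; since $a_L = L\,m^\star(L)/M^\star(L)$ gives $a_L\,\unif(x\mid0,L) = m^\star(L)/M^\star(L)$, the term $a_L\,\unif(\cdot\mid0,L)$ matches $M^\star(L)^{-1} m^\star_{(L,\infty)}$ exactly. Hence on $[0,L]$ the difference collapses to $m^\dagger - m^{\star L} = a_\ell\,\unif(\cdot\mid0,\ell) - M^\star(L)^{-1} m^\star_{[0,\ell)}$.

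Finally, both surviving pieces are nonnegative, supported on $[0,\ell]$, and have total mass at most $M^\star(L)^{-1}P^\star([0,\ell])$ each (this is where $a_\ell = P^\star([0,\ell])/M^\star(L)$ enters, together with $\int_0^\ell m^\star_{[0,\ell)} = P^\star([0,\ell))$). Bounding the $L_1$ norm of a difference by the sum of the two masses then yields $\int|m^\dagger - m^{\star L}|\,dx \le 2\,M^\star(L)^{-1}P^\star([0,\ell])$, and combining this with the first paragraph gives \eqref{eq:bias}.

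The hard part is the exact upper-endpoint cancellation: it is what keeps the bound from acquiring an extra $m^\star(L)$-sized contribution, and it depends simultaneously on the constancy of $m^\star_{(L,\infty)}$ on $[0,L]$ and on the precise value of $a_L$ furnished by Lemma~\ref{lem:KL.min1}. The only additional care needed is in bookkeeping possible atoms of $P^\star$ at the endpoints $\ell$ and $L$ when splitting $P^\star$ into the three regions; since the final statement is an inequality, any such boundary mass can be assigned to the interior block $[\ell,L]$, where it cancels, so these atoms do not affect the stated bound.
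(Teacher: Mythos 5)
Your proposal is correct and follows essentially the same route as the paper's proof: the triangle inequality through $m^{\star L}$, the exact computation $\int |m^{\star L} - m^\star|\,dx = 2\{1-M^\star(L)\}$, and the bound $\int |m^\dagger - m^{\star L}|\,dx \le 2\,M^\star(L)^{-1}P^\star([0,\ell])$ obtained from the structure in Lemma~\ref{lem:KL.min1}. The only cosmetic difference is that you verify the upper-endpoint cancellation explicitly by decomposing over $P^\star$, whereas the paper gets it for free by writing both $m^{\star L}$ and $m^\dagger$ as mixtures against $P^{\star L}$ (which already contains the $\delta_{\{L\}}$ component), so that the two densities agree outright on $[\ell,L]$ and differ only on $[0,\ell)$.
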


\begin{proof}
See Appendix~\ref{proof:bias}.
\end{proof}

To make the bound in \eqref{eq:bias} more concrete, we consider a specific case.  A common choice in the literature \citep[e.g.,][]{salomond2014, martin2019} is to assume $m^\star$ has tails that vanish exponentially fast, so that $m^\star(x) \leq \exp(-bx^r)$, for all large $x$ and some positive constants $b$ and $r$; the case $r=\infty$ corresponds to $m^\star$ having a bounded support.  From this, and standard asymptotic bounds on the incomplete gamma function, it follows that $1-M^\star(L) \lesssim L^{-r} \exp(-b L^r)$, for large $L$. Furthermore, if, e.g., $P^\star$ has a bounded density at 0, then we have $P^\star([0,\ell]) \lesssim \ell$. Combining these two, we arrive at the following, more explicit bound on the $L_1$ bias as a function of $(\ell,L)$:
\[ \int |m^\star(x) - m^\dagger(x)| \, dx \lesssim L^{-r} e^{-b L^r} + \ell. \]
Clearly, by taking $\ell$ small and $L$ even just moderately large, the overall bias as a result of restricting to $\UU = [\ell, L]$ can be made negligibly small.

As a final technical detail in this section, we consider the problem of estimating $m^\star(0)$, the density at its mode, the origin.  This is an interesting and challenging problem, with a variety of applications; see, e.g., \citet{vardi1989}.  In particular, \citet{woodroofesun1993} highlight examples such as time between breakdowns of a system and distribution of galaxies that require the estimation of this modal $m^\star(0)$. The PR algorithm gives an obvious estimator of $m^\star(0)$, in particular, $m_n(0)$. The following result gives a theoretical basis for using this estimate and simulations in Section~\ref{simulation} show that the proposed estimate at 0 performs well when compared to existing methods.

\begin{prop}
\label{prop:origin}
Under the assumptions of Theorem~\ref{thm:monotone}, $m_n(0) \to m^\dagger(0)$ $m^{\star L}$-almost surely. Furthermore, the bias between $m^\dagger(0)$ and $m^\star(0)$ is bounded ,i.e,
\[ m^\dagger (0) - m^\star (0) \lesssim 1 - M^\star(L) \to 0, \quad \text{as $L \to \infty$}. \]
\end{prop}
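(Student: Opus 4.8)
The plan is to treat the two assertions separately: the convergence $m_n(0) \to m^\dagger(0)$ follows quickly from the weak convergence already established in Theorem~\ref{thm:monotone}, while the bias bound is a direct computation using the explicit form of $m^\dagger$ in Lemma~\ref{lem:KL.min1}. The unifying observation for both parts is that evaluating any uniform mixture at the origin collapses to an integral of $u^{-1}$, since $\unif(0 \mid 0, u) = u^{-1}$ for every $u > 0$; that is, $m_P(0) = \int_\UU u^{-1} \, P(du)$.

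For the convergence, this means $m_n(0) = \int_\UU u^{-1} \, P_n(du)$ and $m^\dagger(0) = \int_\UU u^{-1} \, P^\dagger(du)$. Because $\UU = [\ell, L]$ with $\ell > 0$, the map $u \mapsto u^{-1}$ is bounded (by $\ell^{-1}$) and continuous on $\UU$. Theorem~\ref{thm:monotone} supplies $P_n \to P^\dagger$ weakly $m^{\star L}$-almost surely, so integrating this bounded continuous test function against the weakly convergent sequence gives $m_n(0) \to m^\dagger(0)$ $m^{\star L}$-almost surely by the Portmanteau theorem. I expect no obstacle here; the only point worth stressing is that the lower bound $\ell > 0$ is exactly what keeps $u^{-1}$ bounded, so that weak convergence alone suffices and no uniform integrability argument is required.

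For the bias bound, I would evaluate \eqref{eq:mono.m.dagger} at $x = 0$ to obtain $m^\dagger(0) = a_\ell \, \ell^{-1} + a_\UU \int_\UU u^{-1} \, P^\star(du) + a_L \, L^{-1}$, and compare with $m^\star(0) = \int_0^\infty u^{-1} \, P^\star(du)$, the Williamson representation of the monotone density at the origin. Two identities drive the algebra: first, $m^\star(L) = \int_L^\infty u^{-1} \, P^\star(du)$, so the point mass $a_L$ at $L$ absorbs precisely the tail contribution beyond $L$; and second, the bookkeeping identity $M^\star(L) = P^\star([0,L]) + L \, m^\star(L)$, which follows from interchanging the order of integration in $M^\star(L) = \int_0^L \int_x^\infty u^{-1} \, P^\star(du) \, dx$ and observing that $\int_0^L 1_{[0,u]}(x) \, dx = \min(u,L)$. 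After substituting the coefficients of Lemma~\ref{lem:KL.min1} and recombining, the contributions on $[\ell, L]$ and on the tail $[L,\infty)$ merge into $\int_\ell^\infty u^{-1} \, P^\star(du)$, leaving the comparison of the point mass at $\ell$, namely $\ell^{-1} P^\star([0,\ell])$, against the true near-origin contribution $\int_0^\ell u^{-1} \, P^\star(du)$.

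The crux is the monotonicity inequality $\ell^{-1} P^\star([0,\ell]) \leq \int_0^\ell u^{-1} \, P^\star(du)$, valid because $u^{-1} \geq \ell^{-1}$ for $u \in (0,\ell]$. This says the point mass at $\ell$ \emph{under}-represents the true mass near the origin, so that collecting terms yields $m^\dagger(0) \leq M^\star(L)^{-1} m^\star(0)$, whence
\[ m^\dagger(0) - m^\star(0) \leq m^\star(0) \, \frac{1 - M^\star(L)}{M^\star(L)} \lesssim 1 - M^\star(L), \]
using that $M^\star(L) \to 1$ bounds $M^\star(L)^{-1}$ for large $L$; the right-hand side tends to $0$ as $L \to \infty$. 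I regard this cancellation as the main obstacle: it is what makes the bound one-sided and free of any $\ell$-dependent term, and it explains, in the spirit of the over-estimation phenomenon noted earlier for the Grenander estimator, why $m^\dagger(0)$ can only over-shoot $m^\star(0)$.
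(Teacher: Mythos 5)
Your proof is correct. On the first claim you and the paper do essentially the same thing: the paper notes that $m_n$ and $m^\dagger$ are constant on $[0,\ell]$ and invokes $m_n(\ell)\to m^\dagger(\ell)$ from the proof of Theorem~\ref{thm:monotone}, and that convergence was itself obtained there by integrating the bounded continuous function $u\mapsto \unif(\ell\mid 0,u)=u^{-1}$ on $\UU=[\ell,L]$ against the weak convergence $P_n\to P^\dagger$ --- exactly your test-function argument, evaluated at $\ell$ rather than at $0$. On the bias bound, however, your route is genuinely different from, and cleaner than, the paper's. The paper splits $m^\dagger(0)-m^\star(0)$ into three differences aligned with the three components of $P^\dagger$ and bounds each by $1-M^\star(L)$; to control the $\delta_{\{\ell\}}$ and interior terms it must import two side conditions from the discussion surrounding Proposition~\ref{prop:bias}, namely $P^\star([0,\ell])\lesssim\ell$ and $\int_\UU u^{-1}\,P^\star(du)=O(1)$. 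You instead merge the interior and tail contributions via $a_L L^{-1}=M^\star(L)^{-1}\int_L^\infty u^{-1}\,P^\star(du)$, so that, with the normalization actually delivered by the proof of Lemma~\ref{lem:KL.min1} (namely $P^\dagger=a_\ell\,\delta_{\{\ell\}}+M^\star(L)^{-1}P^\star|_\UU+a_L\,\delta_{\{L\}}$),
\[
m^\dagger(0)=\frac{1}{M^\star(L)}\Bigl\{\ell^{-1}P^\star([0,\ell])+\int_\ell^\infty u^{-1}\,P^\star(du)\Bigr\}\le\frac{m^\star(0)}{M^\star(L)},
\]
by the single monotonicity inequality $\ell^{-1}P^\star([0,\ell])\le\int_0^\ell u^{-1}\,P^\star(du)$. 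This yields $m^\dagger(0)-m^\star(0)\le m^\star(0)\{1-M^\star(L)\}/M^\star(L)$, requiring only $m^\star(0)<\infty$, with a bound genuinely free of $\ell$ and of any bounded-density assumption on $P^\star$ near the origin; what the paper's term-by-term version buys in exchange is that it parallels the structure of Lemma~\ref{lem:KL.min1} and the $L_1$ analysis of Proposition~\ref{prop:bias}, but yours is sharper and assumption-lighter. One caveat: your closing remark that $m^\dagger(0)$ ``can only over-shoot'' $m^\star(0)$ does not follow and is false in general --- your own key inequality says the point mass at $\ell$ \emph{under}-represents the mass of $P^\star$ below $\ell$, so if $P^\star$ concentrates appreciable mass near $0$ (e.g., $P^\star$ supported on $[\eps,2\eps]$ with $\eps\ll\ell$), then $m^\dagger(0)\ll m^\star(0)$. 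This is only a misstatement in the commentary; the one-sided bound you prove, which is all the proposition asserts, stands.
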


\begin{proof}
See Appendix~\ref{proof:origin}. 
\end{proof}

\subsection{Numerical illustrations}
\label{simulation}

In this section we compare different methods for monotone density estimation to our PR-based method. The four methods we consider are the Grenander estimate, a Bayesian approach using a Dirichlet process, Bayesian approach using an empirical prior, and the method based on optimization of the penalized likelihood. The Grenander estimate is based on the nonparametric MLE and can be calculated easily using the R package {\tt fdrtool} \citep{fdrtool}. Settings for the Dirichlet process mixture and the empirical Bayes were based on those suggested in \citet{martin2019} and computed using the R codes he provided on his website.\footnote{\url{https://www4.stat.ncsu.edu/~rmartin/}} The penalized likelihood maximization was based on \citet{woodroofesun1993} and we used one of the values  recommended by those authors for their penalization parameter, i.e., $\alpha = n^{-1} \log n$. For PR, we take the mixing distribution support to be $\UU = [\ell,L]$, with $\ell = 10^{-5}$ and $L = \max(X)$. The initial guess $P_0$ is taken to be uniform on $\UU$. To reduce the dependence of the PR estimator on the data order, we average the PR estimates over 25 random permutations of the data.  For the comparisons below, we consider both real and simulated data sets.

First, we consider data coming from a study of suicide risks reported in \citet{silverman2018}, which consists of lengths of psychiatric treatment for $n = 86$ patients used as control. As per the detailed study of suicide risks in \citet{copasfryer1980}, there is a higher risk for suicide in the early stages of treatment, so modeling these data with a monotone density is appropriate. Figure~\ref{fig:hist} shows a comparison of the four monotone density estimation methods discussed above with PR over a histogram of the data. PR gives a smooth estimate of the monotone density in a very short amount of time, much faster than the Bayes and empirical Bayes estimates that require Markov chain Monte Carlo.  The take-away message is that, PR's misspecification bias---due to the choice of $\ell$ and $L$---can be easily controlled and that it gives a quality estimate compared to the other four methods.  In fact, the PR estimate in this case is smoother than the other four methods, a desirable feature in applied data analysis.  The simulations below will give a clearer picture of how PR performs compared to the other four methods. 



\begin{figure}[t]
\centering
\includegraphics[width = 0.6\linewidth]{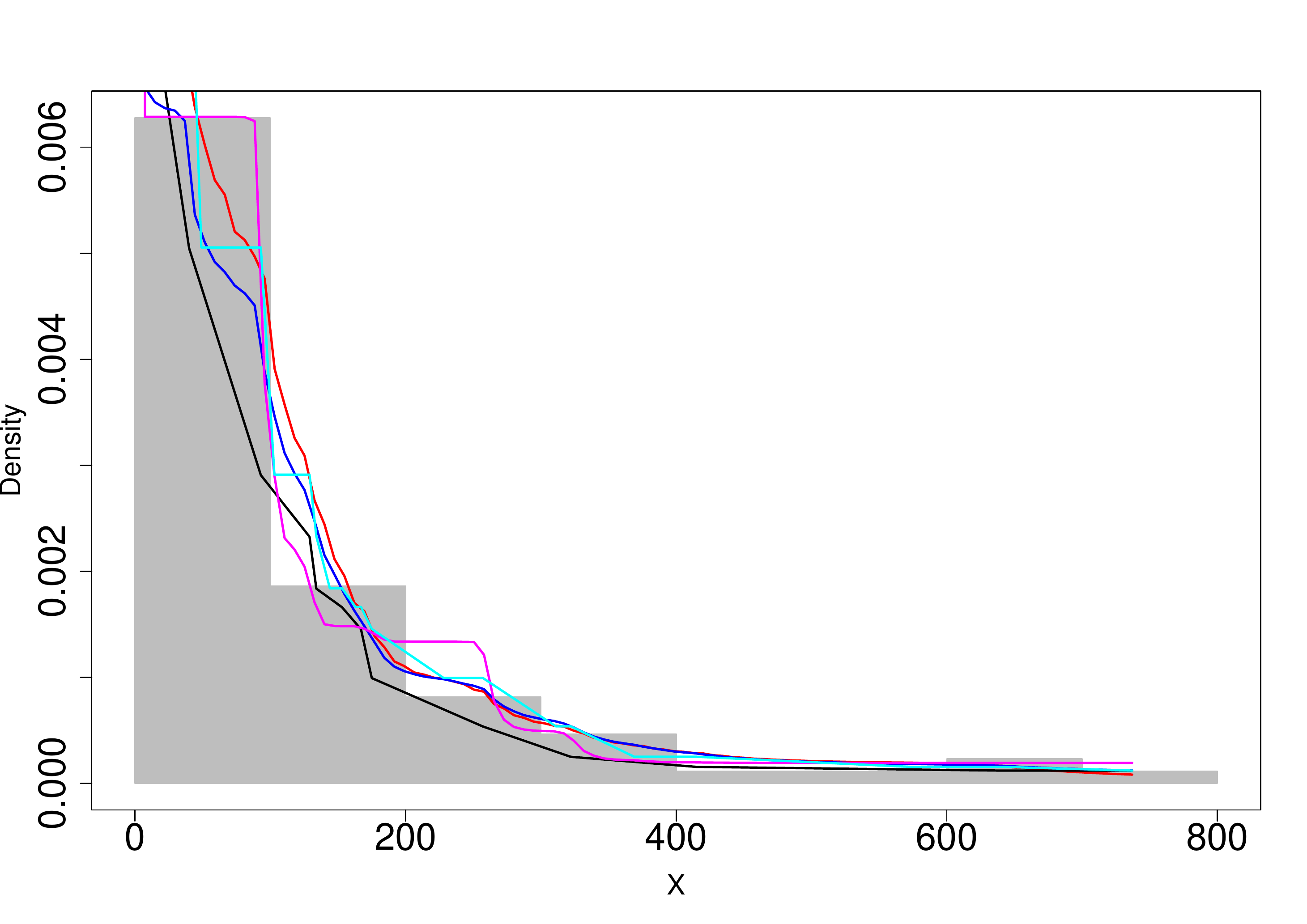}
\caption{A monotone density is fit to the suicide risk data from \citet{silverman2018} with the four different methods: PR (red), Grenander (black), empirical Bayes (blue), Bayes (magenta), and penalized likelihood (cyan).}
\label{fig:hist}
\end{figure}



Second, we consider two true monotone densities $m^\star$, namely, the standard exponential and the half standard normal. We carry out the simulation study over sample sizes of $n = 50, 100, 200$. For each $n$, we generate 200 data sets of size $n$ and produce the five different estimates on each data set.   As our metric of comparison, we use the total variation (or $L_1$) distance between the true density and the estimate. Additionally since inconsistency of the Grenander estimate at the origin is a well-known complication we also look at the ratio $\hat m(0)/m^\star(0)$ for each method. Boxplots summarizing both the $L_1$ distance and the at-the-origin ratio for the two simulations are shown in Figures~\ref{fig:boxplot_exp} and \ref{fig:boxplot_norm}. Consider the boxplots summarizing the $L_1$ distance. As the sample size increases, the boxplots for all five methods shrink towards 0, as expected. Notably, performance of PR is better than the Grenander estimator over all sample sizes. It is also faster and with slightly better performance when compared to the two Bayesian estimates, and is comparable to the penalized likelihood estimate. For estimating the density at 0, we compare PR with only the state-of-art estimates, namely the one based on penalizing the nonparametric MLE near 0 and the DP mixture. Even though PR is not tailored specifically for estimation at 0, as the penalized likelihood estimator is, its performance is competitive with the other methods.

\begin{figure}[t]
\centering
\subfigure[$L_1$ distance, $n = 50$]
{\includegraphics[width = 0.32\linewidth]{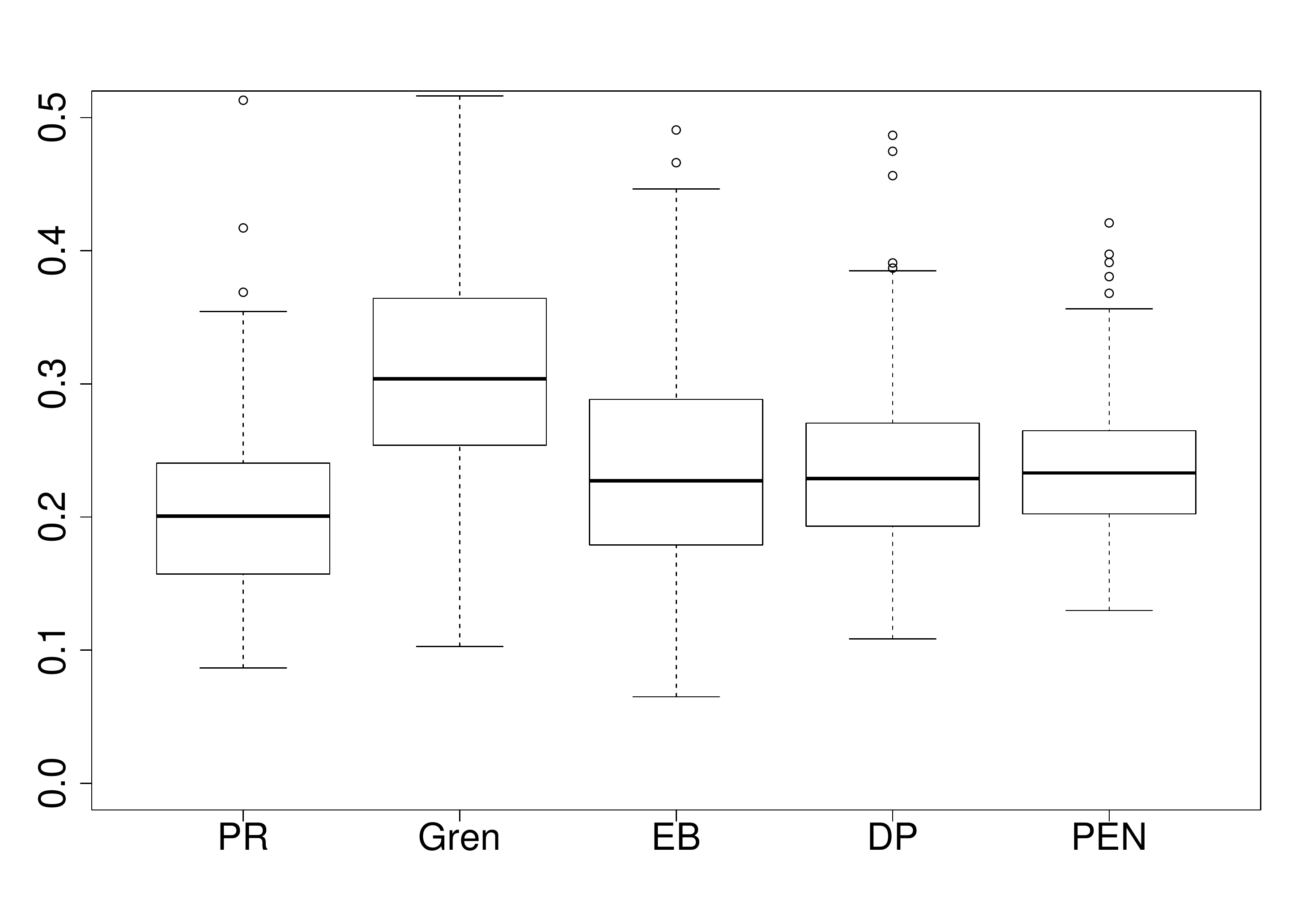}} 
\subfigure[$L_1$ distance, $n = 100$]
{\includegraphics[width = 0.32\linewidth]{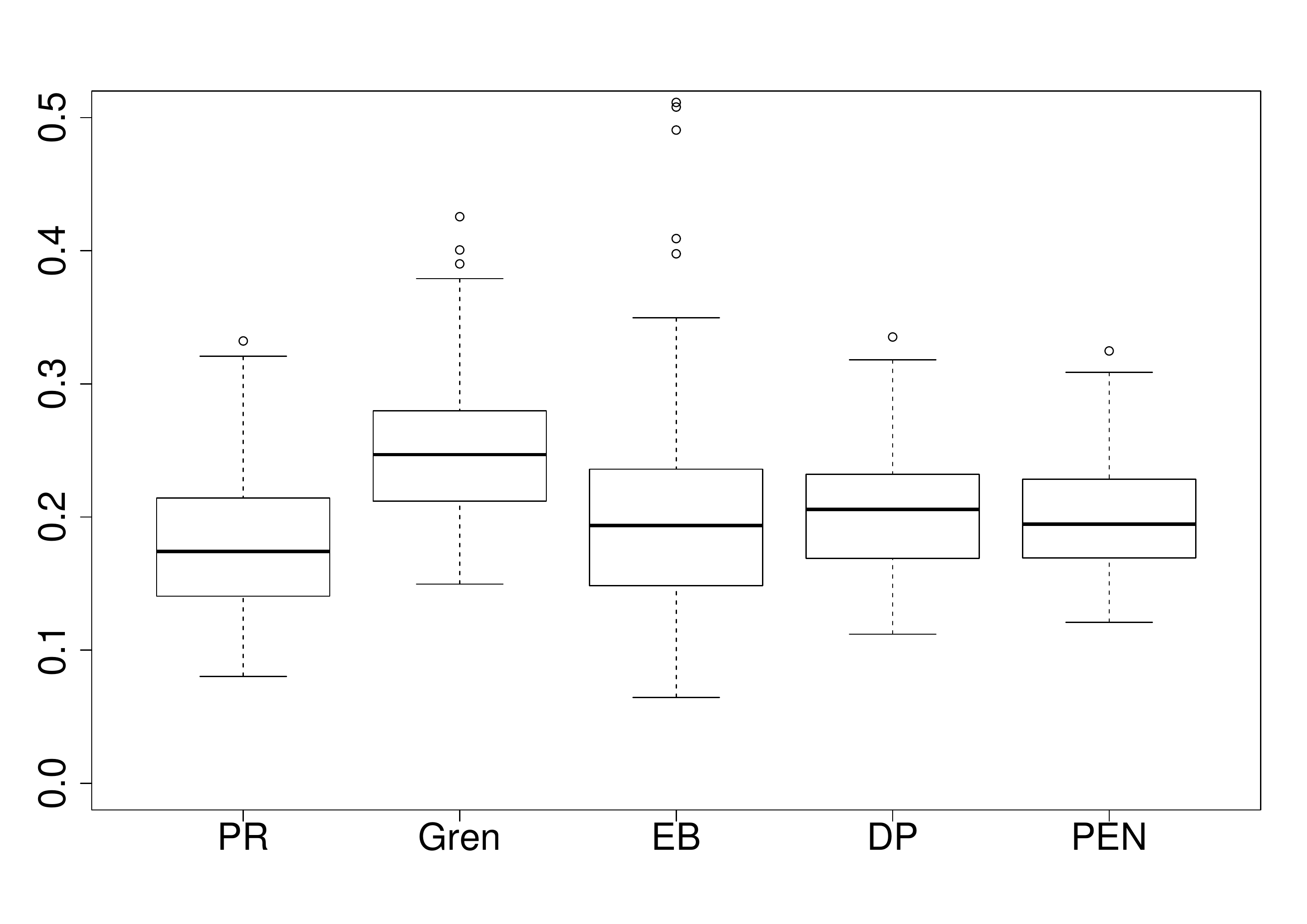}} 
\subfigure[$L_1$ distance, $n = 200$]
{\includegraphics[width = 0.32\linewidth]{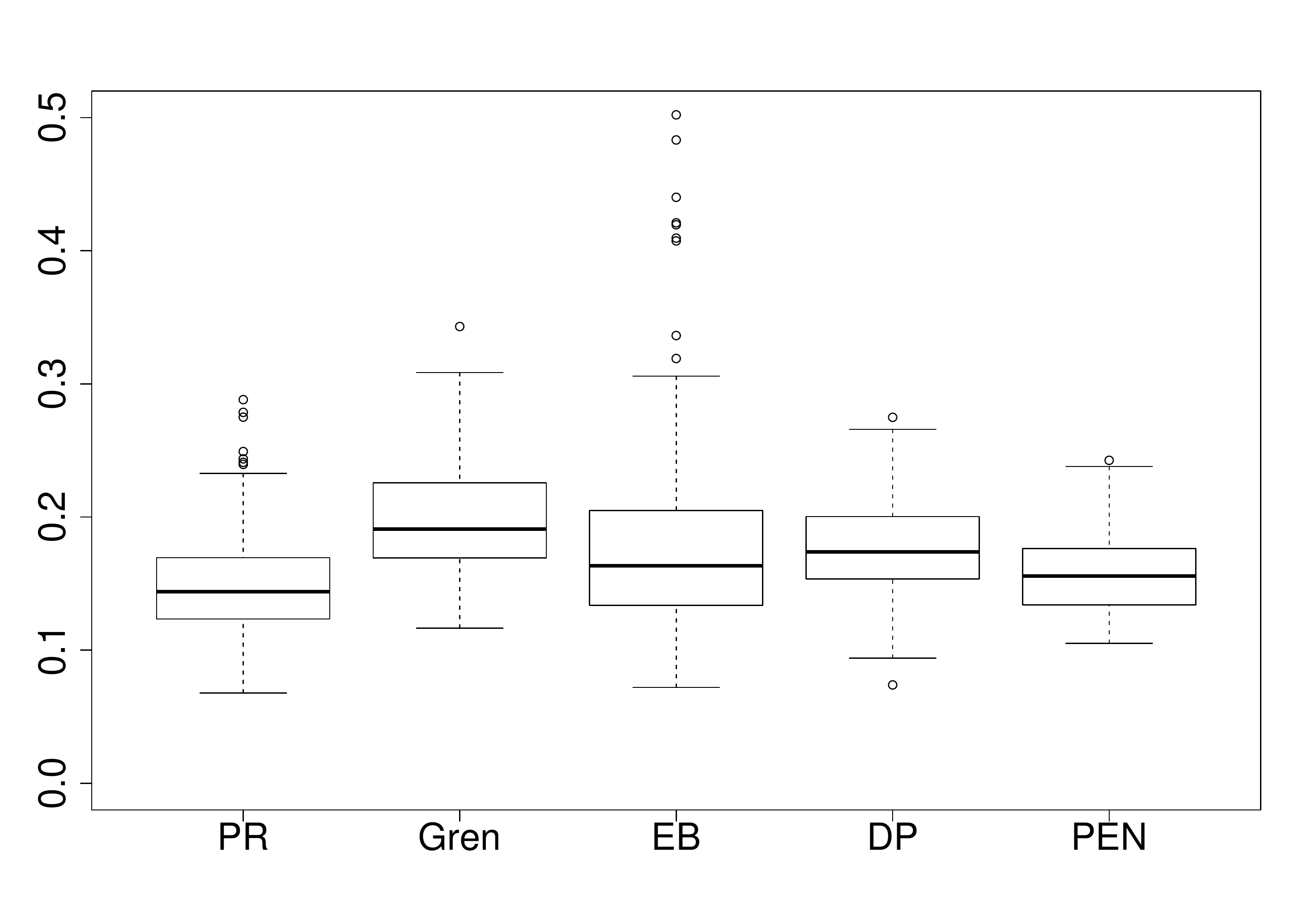}} 
\subfigure[$\hat m(0)/m^\star(0)$, $n=50$]
{\includegraphics[width = 0.32\linewidth]{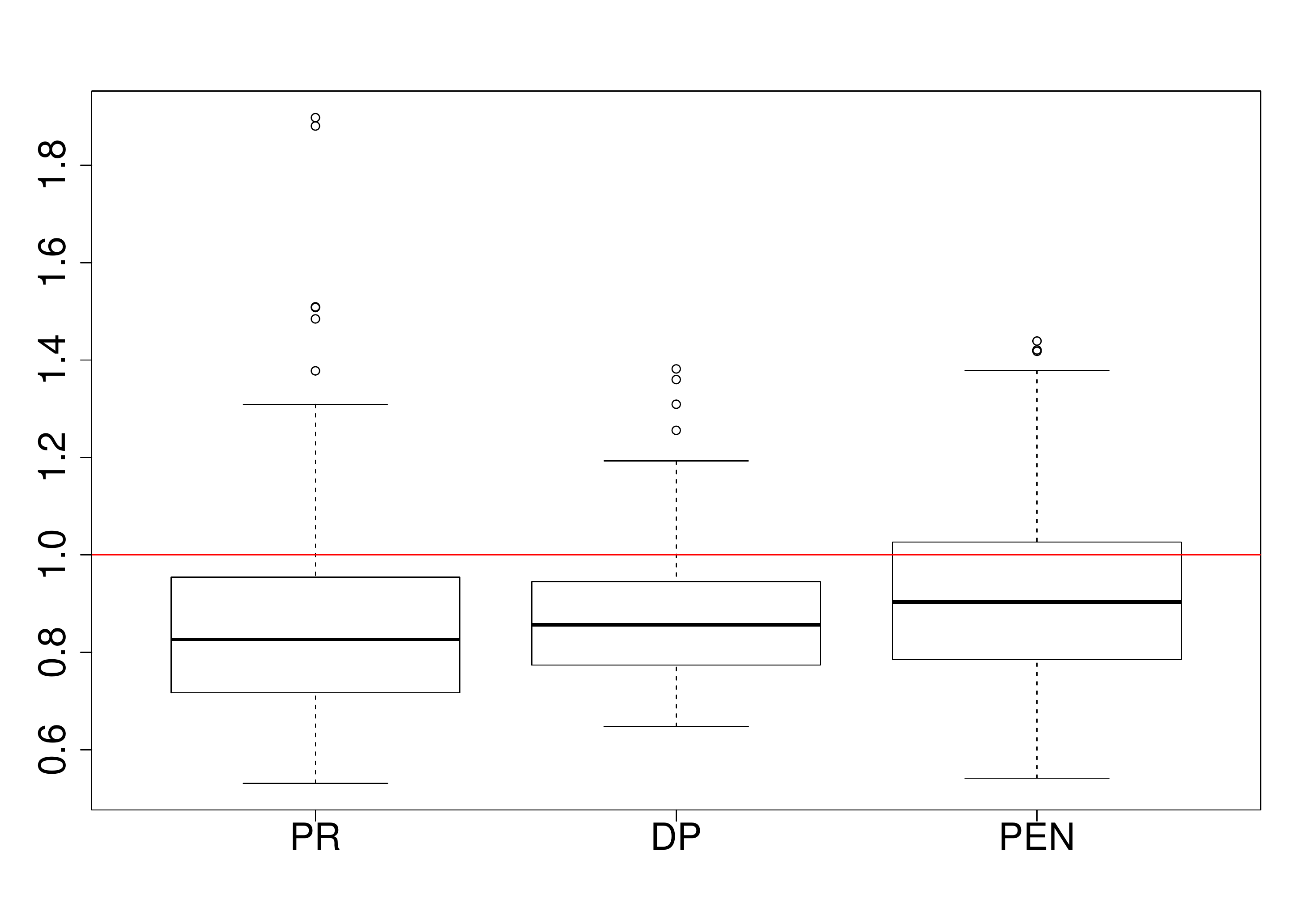}}
\subfigure[$\hat m(0)/m^\star(0)$, $n=100$]
{\includegraphics[width = 0.32\linewidth]{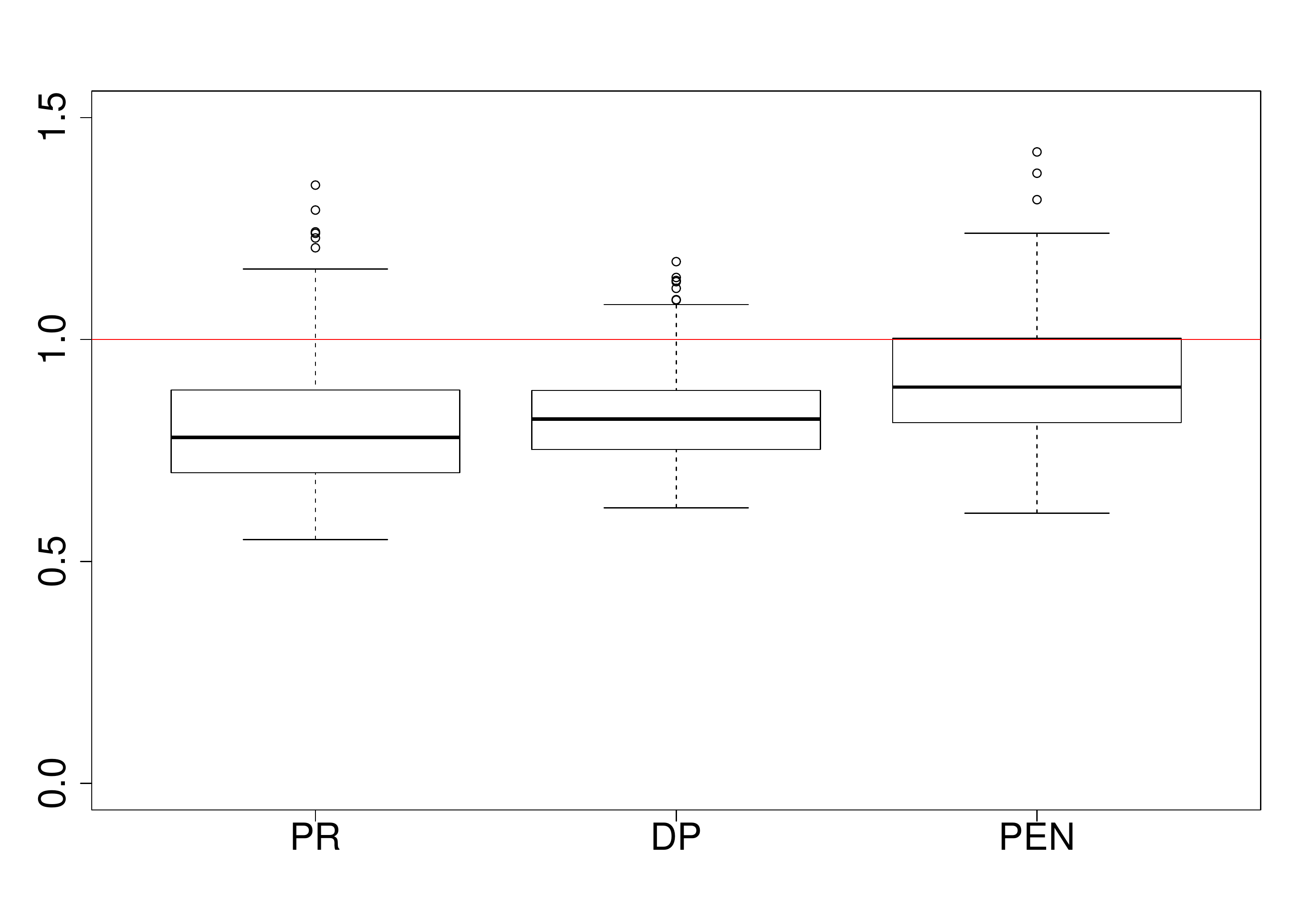}}
\subfigure[$\hat m(0)/m^\star(0)$, $n=200$]
{\includegraphics[width = 0.32\linewidth]{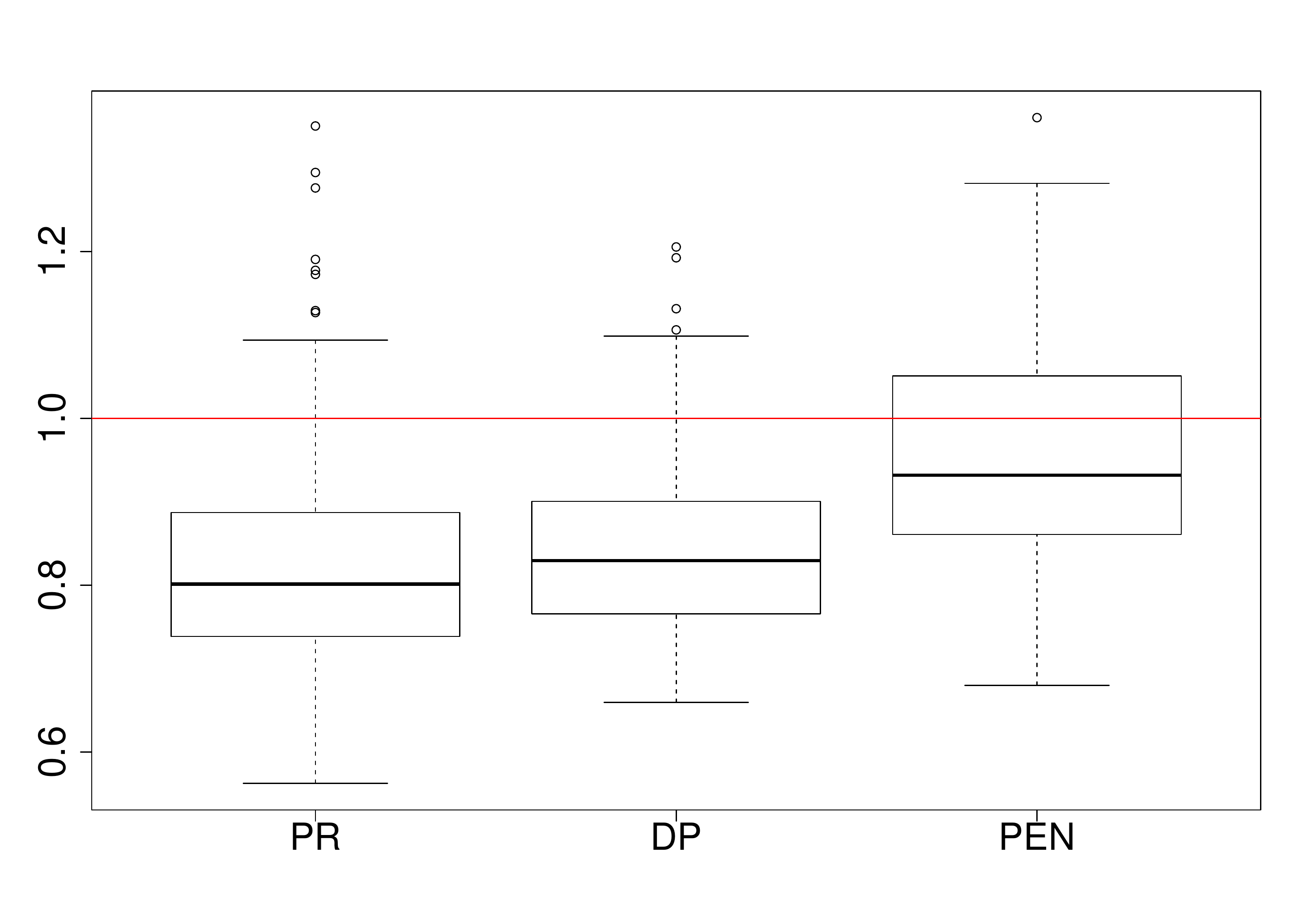}}
\caption{True monotone density is standard exponential}
\label{fig:boxplot_exp}
\end{figure}

\begin{figure}[t]
\centering
\subfigure[$L_1$ distance, $n = 50$]
{\includegraphics[width = 0.32\linewidth]{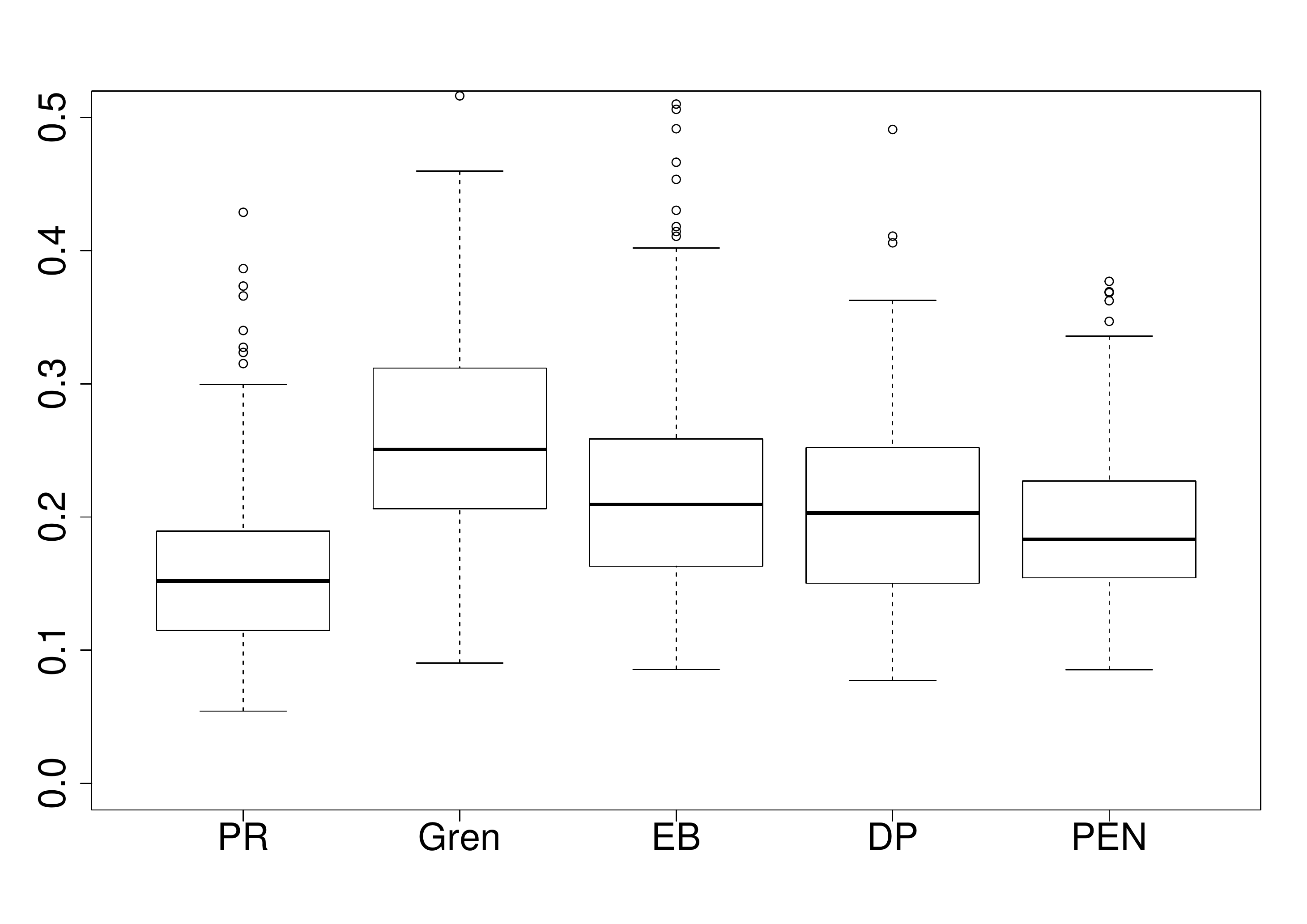}} 
\subfigure[$L_1$ distance, $n = 100$]
{\includegraphics[width = 0.32\linewidth]{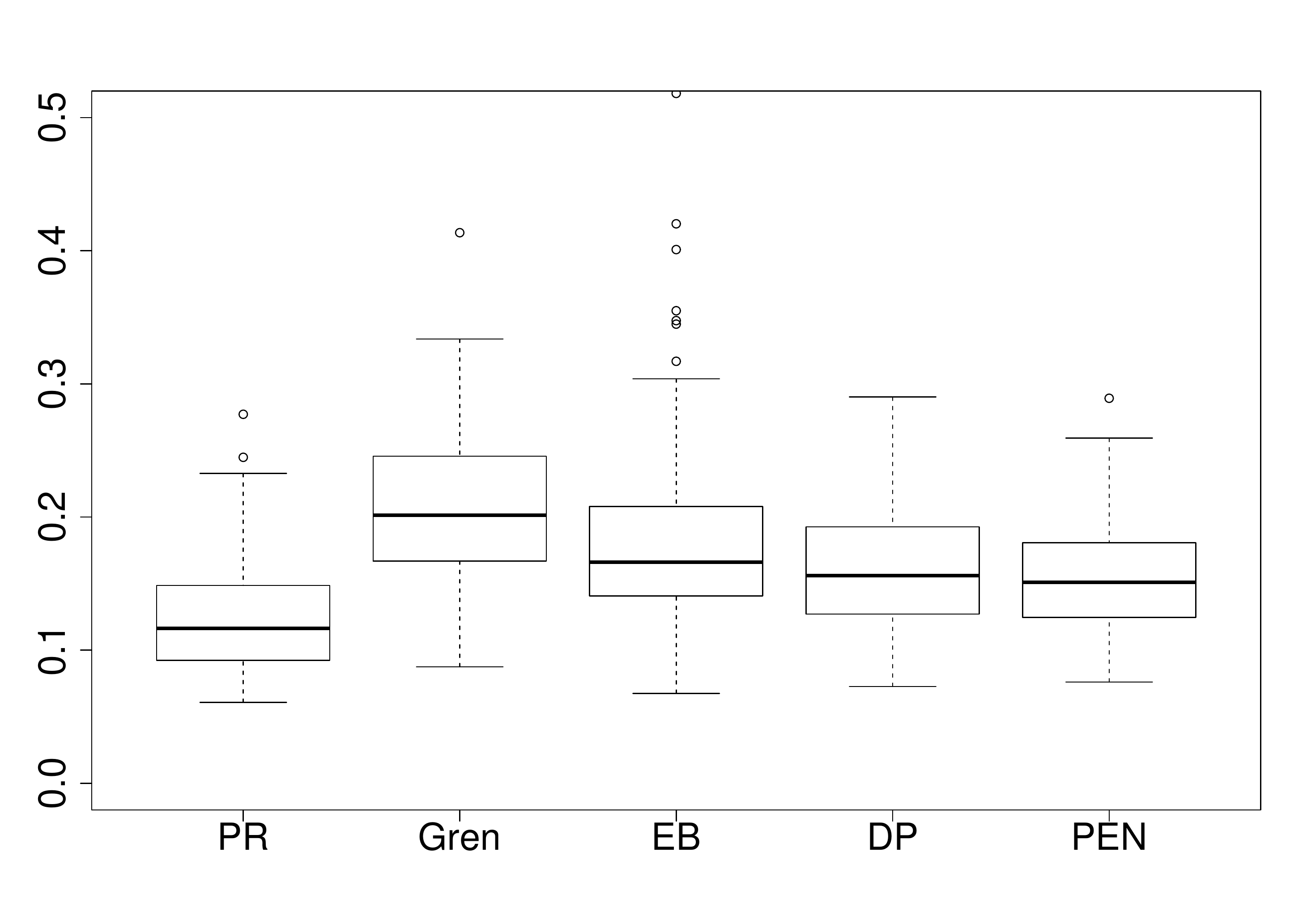}} 
\subfigure[$L_1$ distance, $n = 200$]
{\includegraphics[width = 0.32\linewidth]{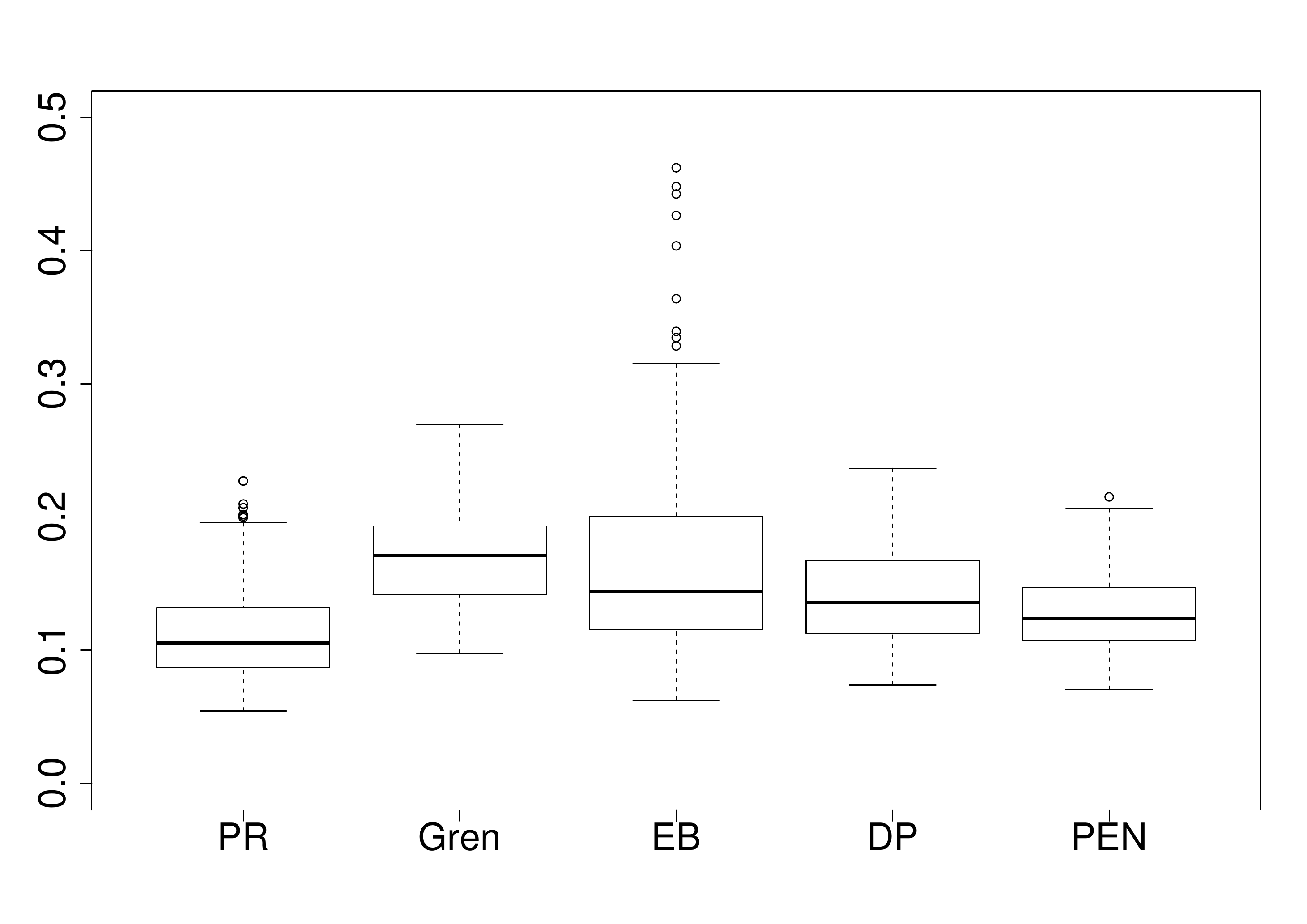}} 
\subfigure[$\hat m(0)/m^\star(0)$, $n=50$]
{\includegraphics[width = 0.32\linewidth]{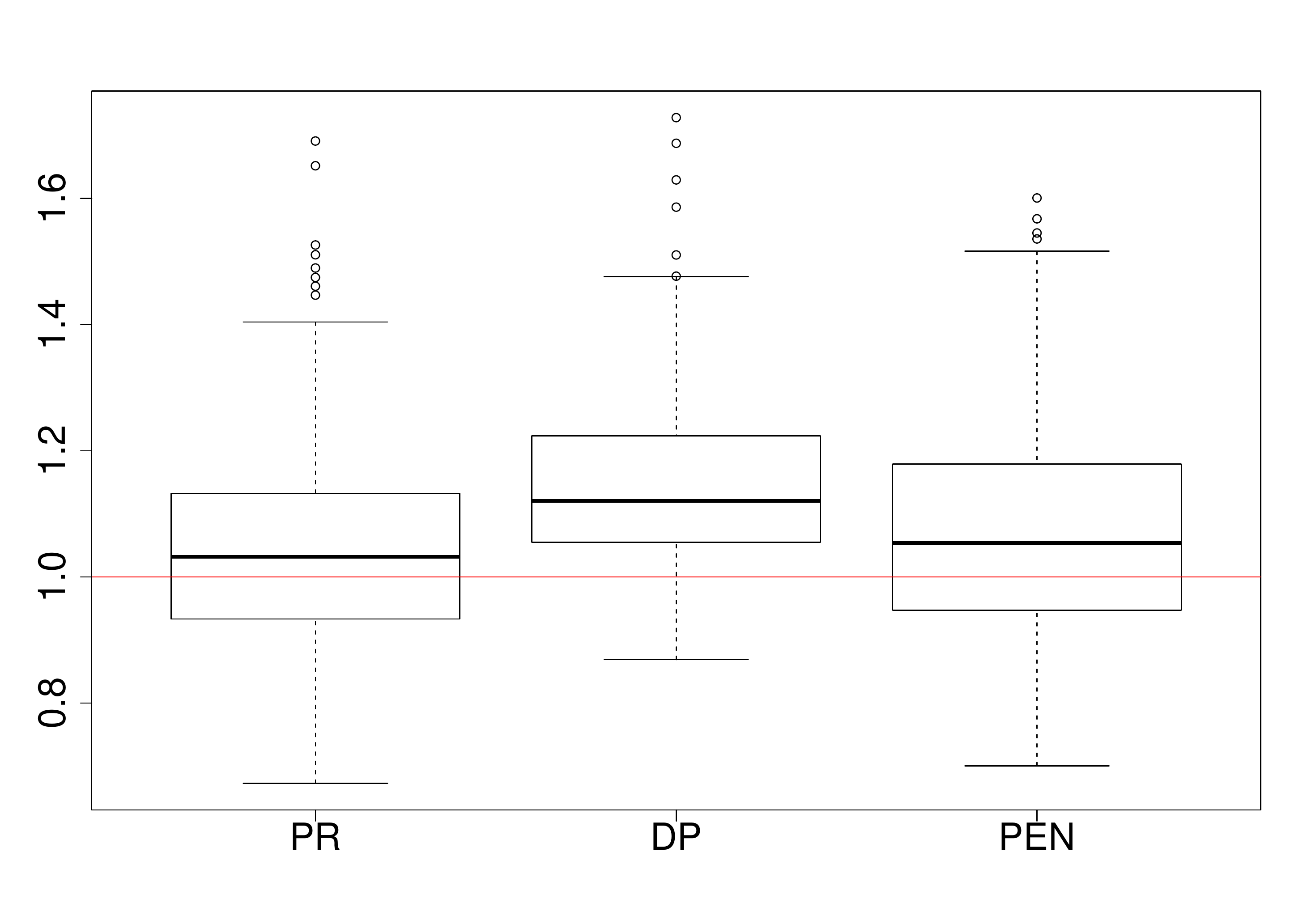}}
\subfigure[$\hat m(0)/m^\star(0)$, $n=100$]
{\includegraphics[width = 0.32\linewidth]{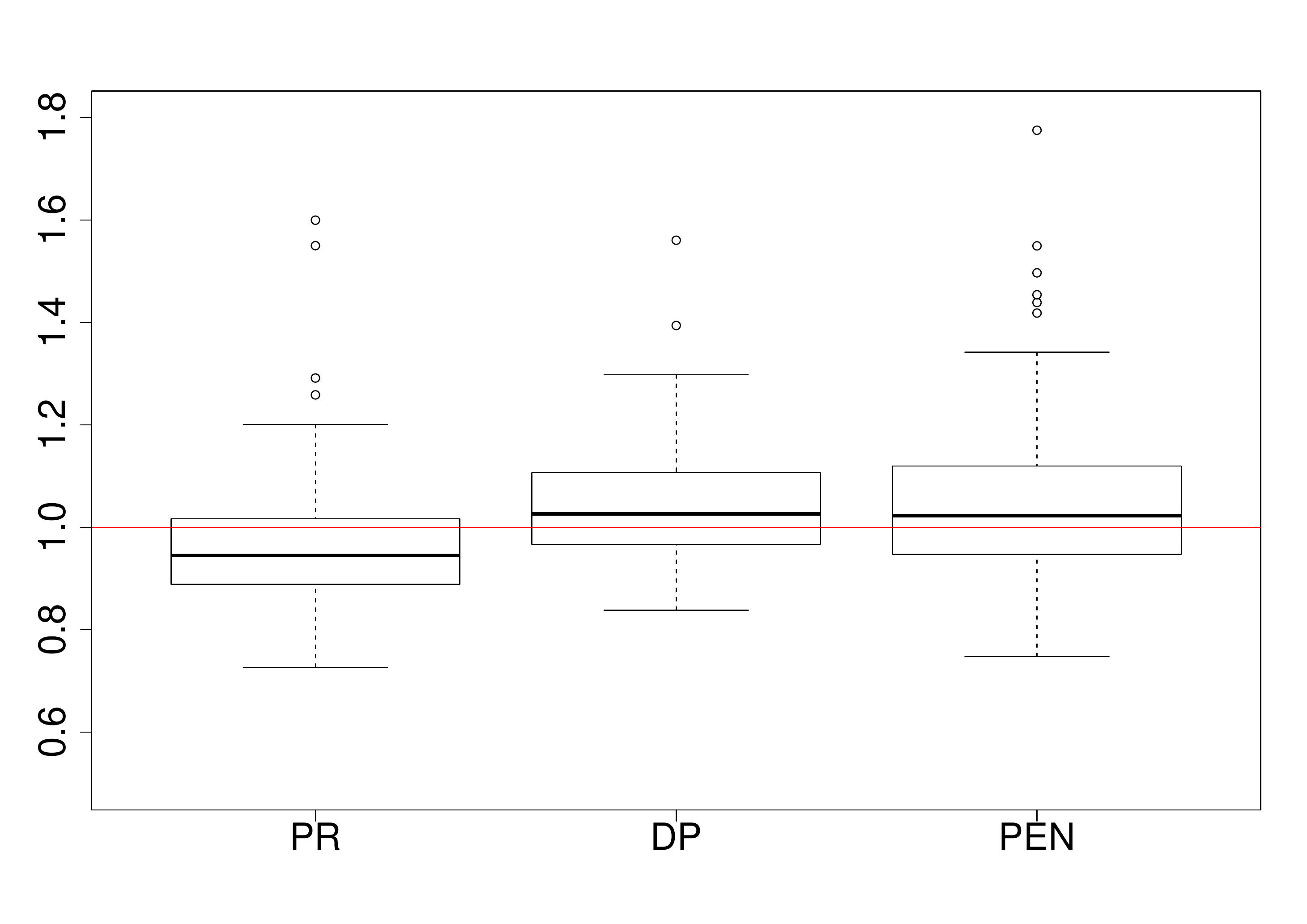}}
\subfigure[$\hat m(0)/m^\star(0)$, $n=200$]
{\includegraphics[width = 0.32\linewidth]{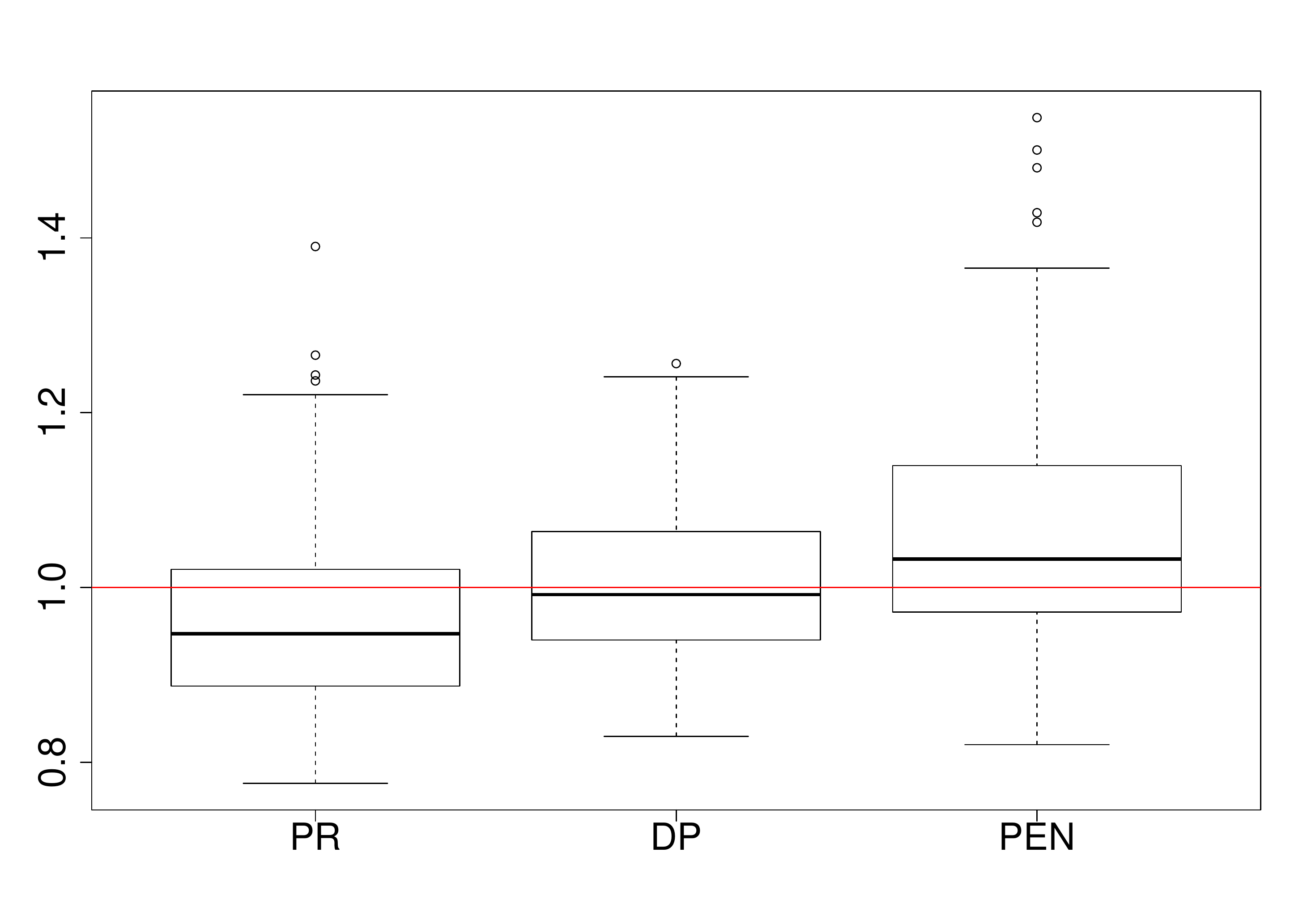}}
\caption{True monotone density is half standard Normal}
\label{fig:boxplot_norm}
\end{figure}

\section{Conclusion}
\label{S:discuss}

Estimation of mixing distributions in mixture models is a challenging problem, one for which there are very few satisfactory methods available.  To our knowledge, the PR algorithm is the one general method available that is both fast and capable of nonparametrically estimating a mixing distribution having a density with respect to any user-specified dominating measure.  Despite the simple and fast implementation of the PR algorithm, and the strong empirical performance observed in numerous applications, its theoretical analysis and justification is non-trivial because of the recursive structure.  Previous work has established consistency of the PR estimates under relatively strong conditions.  Most concerning is that there are known examples, such as monotone density estimation using uniform mixtures, for which the sufficient conditions in previous work do not hold.  The main focus of the present paper was to weaken those overly-strong conditions in order to broaden the range of problems in which PR can be applied.  In particular, the new sufficient conditions can be checked for mixtures of uniform kernels, which puts PR in a position to solve the non-trivial problem of monotone density estimation on $[0,\infty)$.  

There are a number of possible extensions and/or open problems that could be considered.  First, from a practical or methodological point of view, there is a natural extension of the motivating monotone density estimation application.  That is, what can be done if the location of the mode itself is unknown?  This is a non-trivial problem and has been investigated by a number of researchers, including \citet{liughosh2020}.  In the PR framework, the natural approach would be to treat the mode as an unknown, non-mixing parameter contained in the kernel, and apply the PR marginal likelihood strategy in \citet{martintokdar2011} to estimate both the mode and the mode-specific mixing distribution.  How this proposal compares to existing methods remains to be investigated.  

Second, from a theoretical point of view, it is undesirable to work with a fixed and compact mixing distribution support $\UU$.  A natural extension would be to introduce a type of sieve, to allow the support to depend on the sample size, i.e., $\UU = \UU_n$.  The use of a $n$-dependent support $\UU_n$, however, is difficult and awkward in the context of PR.  First, unlike usual likelihood-based methods that assume all the data to be available at once, PR is technically meant to be used for recursive estimation with online data.  In that case, having a sample size dependent support is unnatural since the sample size is not set in advance.  But even if we ignore PR's recursive structure and treat it as being applied to batch data, the analysis is based on martingales that do implicitly treat the data points one by one in a sequence, so having any $n$-specific components in the algorithm itself is awkward.  Beyond awkwardness, there is a specific technical obstacle.  Much of the analysis depends on properties of the functional $T$ defined in \eqref{eq:T}.  This functional depends on $\UU$ and so, if $\UU$ is made to depend on $n$, then we end up with a sequence, $T_n$, of functionals that are applied to the PR sequence of estimates, $P_n$, so new techniques would be needed in order to analyze a sequence of random variables like $T_n(P_{n-1})$.

\section*{Acknowledgments}

This work was supported by the U.S.~National Science Foundation, grant DMS--1737929.

\appendix 

\section{Proofs}
\label{S:proofs}

\subsection{Proof of Theorem~\ref{thm:mixture.KL}}
\label{SS:proof.thm1}

We start by reviewing some details from the analysis in \citet{martintokdar2009}.  From the recursive form of the PR estimate of the mixing distribution, and the linearity of the mixture model, clearly a similar recursive form holds for the mixture. That is, 
\[ m_n(x) = (1 - w_n) \, m_{n-1}(x) + w_n \, h_{n, X_n}(x) \]
where,
\[ h_{n,y}(x) = \frac{\int k(x \mid u) \, k(y \mid u) \, P_{n-1}(du)}{m_{n-1}(y)}, \quad x,y \in \XX. \]
For later, define the function $H_{n,y}(x)$ as
\[ H_{n,y}(x) = \frac{h_{n,y}(x)}{m_{n-1}(x)} - 1 , \quad x,y \in \XX. \]
By Taylor's theorem, we can write 
\[ \log(1 + x) = x - x^2 R(x), \quad x > -1, \]
where the remainder term $R$ satisfies $0 \leq R(x) \leq \max \{ 1, (1+x)^{-2} \}$.  This remainder bound will be important later.  

Let $K_n = K(m^\star, m_n)$.  Then from that recursive form of the mixture density updates above, and this Taylor approximation, it can be shown that 
\[ K_n = K_{n-1} - w_n \int H_{n, X_n}(x) \, m^\star(x) \, dx + w_n^2 \int H_{n, X_n}^{2}(x) \, R(w_n H_{n, X_n}(x)) \, m^\star(x) \, dx. \]
Next, let $\A_r$ denote the $\sigma$-algebra generated by data $X_1,\ldots,X_r$, for $r \geq 1$.  Now take conditional expectation of the above display, given $\A_{n-1}$, to get 
\begin{equation}
\label{martingale}
\E(K_n \mid \A_{n-1}) = K_{n-1} - w_n T(P_{n-1}) + w_n^2 E(Z_n \mid \A_{n-1}),
\end{equation}
where,
\begin{align*}
T(\Phi) & = \int_\UU \left \{\int_\XX \frac{m^{\star}(x)}{m_{\Phi}(x)} k(x \mid u) \, dx \right \}^{2} \, \Phi(du) - 1 \\
Z_n & =  \int_\XX H_{n, X_n}^{2}(x) \, R(w_n H_{n, X_n}(x)) \, m^\star(x) \, dx. 
\end{align*}
If we let $K_n^\star = K_n - K(m^\star, m^\dagger)$, then the same relationship as in \eqref{martingale} holds, i.e., 
\begin{equation}
\label{eq:martingale2}
\E(K_n^\star \mid \A_{n-1}) = K_{n-1}^\star - w_n T(P_{n-1}) + w_n^2 \E(Z_n \mid \A_{n-1}).
\end{equation}
Surprisingly, this form is quite convenient---it is an {\em almost supermartingale} like those studied by \citet{robbinssiegmund}.  Below we restate (a simple version of) Robbins and Siegmund's main theorem for the reader's convenience.  

\begin{rsthm}
Consider non-negative random variables $(M_n, \zeta_n, \xi_n)$, where $(M_n)$ is adapted to a filtration $(\mathcal{A}_n)$.  If
\begin{equation}
\label{eq:rs}
   \E(M_n \mid \mathcal{A}_{n-1}) \leq M_{n-1} - \zeta_{n-1} + \xi_{n-1}.
\end{equation}
and $\sum_n \xi_n < \infty$, almost surely, then $M_n$ converges and $\sum_n \zeta_n < \infty$ almost surely.
\end{rsthm}

The equation in \eqref{eq:martingale2} satisfies the criteria in \eqref{eq:rs}, where $\zeta_{n-1} = w_n T(P_{n-1})$ and $\xi_{n-1} = w_n^2 \E(Z_n \mid \A_{n-1})$.  We need to check that $\sum_n w_n^2 \E(Z_n \mid \A_{n-1})$ is finite almost surely, which amounts to getting a suitable upper bound on $Z_n$ and its conditional expectation.  Here is where our analysis starts to differ from that in \citet{martintokdar2009}.  

The most complicated part of the definition of $Z_n$ is its dependence on the Taylor approximation remainder described above.  Recalling that upper bound, we have 
\[ R(w_n H_{n,X_n}(x)) \leq \max[ 1, \{1 + w_n H_{n, X_n}(x)\}^{-2}]. \]
But since $h_{n,X_n}$ and $m_{n-1}$ are density functions, their ratio is non-negative, so 
\[ w_n H_{n,X_n}(x) = w_n \Bigl( \frac{h_{n,X_n}(x)}{m_{n-1}(x)} - 1 \Bigr) \geq -w_n > -w_1. \]
Therefore, $R(w_n H_{n,X_n}(x)) \leq \max\{ 1, (1 - w_1)^{-2}$, a constant, so  
\[ Z_n \lesssim \int H_{n,X_n}^2(x) \, m^\star(x) \,dx \leq 1 + \int \Bigl( \frac{h_{n,X_n}(x)}{m_{n-1}(x)} \Bigr)^2 \, m^\star(x) \, dx. \]
Since we only need to get an upper bound up to a multiplicative constant, we will ignore that constant lumped inside of ``$\lesssim$'' in what follows; we will also ignore the leading ``$1+$'' since the bound will ultimately get multiplies by $w_n^2$, which itself is summable by assumption.  From this bound, plug in the definition of $h_{n,X_n}$ to get 
\begin{align*}
Z_n & \leq \int \Bigl\{ \frac{\int k(x \mid u) \, k(X_n \mid u) \, P_{n-1}(du)}{m_{n-1}(x) \, m_{n-1}(X_n)} \Bigr\}^2 \, m^\star(x) \, dx \\
& \leq \int \frac{\int k^2(x \mid u) \, k^2(X_n \mid u) \, P_{n-1}(du)}{m_{n-1}^2(x) \, m_{n-1}^2(X_n)} \, m^\star(x) \,dx,
\end{align*}
where the second inequality is by Cauchy--Schwartz.  Next, we focus on one of the terms in the denominator, say, $m_{n-1}(x)$.  From that recursive form for the mixture density updates, we immediately see that 
\[ m_{n-1}(x) \geq (1-w_{n-1}) \, m_{n-2}(x) \geq \cdots \geq m_0(x) \prod_{i=1}^{n-1} (1-w_i), \quad \text{any $x$}. \]
Plug in this lower bound for both terms in the denominator of the bound for $Z_n$ to get 
\[ Z_n \leq \prod_{i=1}^{n-1} (1-w_i)^{-4} \int \frac{\int k^2(x \mid u) \, k^2(X_n \mid u) \, P_{n-1}(du)}{m_0^2(x) \, m_0^2(X_n)} \, m^\star(x) \, dx. \]
Now take conditional expectation with respect to $\A_{n-1}$ and interchange the order of integration (which is allowed since the integrand is non-negative) to get 
\[ \E(Z_n \mid \A_{n-1}) \leq \prod_{i=1}^{n-1} (1-w_i)^{-4} \int \Bigl\{ \int \frac{k^2(x \mid u)}{m_0^2(x)} \, m^\star(x) \, dx \Bigr\}^2 \, P_{n-1}(du). \]
By Condition~\ref{cond:integrable}, we have that the expression inside curly braces above is bounded, uniformly in $u$, by a constant.  Therefore, 
\[ \E(Z_n \mid \A_{n-1}) \lesssim \prod_{i=1}^{n-1} (1-w_i)^{-4}. \]
Next we used the assumed form of the weight sequence, in Condition~\ref{cond:weights}, to bound the above product.  In general, we have 
\[ \log \prod_{i=1}^{n-1} (1-w_i)^{-4} = -4 \sum_{i=1}^{n-1} \log(1-w_i). \]
Using the standard bound, $-\log(1-w) \geq w(1-w)^{-1}$, and the fact that the $w_i$'s are decreasing, we have 
\[ \log \prod_{i=1}^{n-1} (1-w_i)^{-4} = -4 \sum_{i=1}^{n-1} \log(1-w_i) \leq \frac{4}{1-w_1} \sum_{i=1}^{n-1} w_i. \]
According to Condition~\ref{cond:weights}, $w_i = a(i+1)^{-1}$, the summation in the above expression is of the order $\log n$, which implies 
\[ \prod_{i=1}^{n-1} (1-w_i)^{-4} \leq n^{8a/(2-a)}. \]
Putting everything together, we get 
\[ w_n^2 \E(Z_n \mid A_{n-1}) \lesssim n^{-2 + 8a/(2-a)}. \]
Since $a < \frac29$, the exponent is less than $-1$, hence the upper bound is summable almost surely, thus verifying the hypothesis of the Robbins--Siegmund theorem.  Consequently, we can conclude that 
\[ K_n^\star \to K_\infty^\star \quad \text{and} \quad \sum_n w_n T(P_{n-1}) < \infty, \quad \text{almost surely}. \]
It remains to show that the limit, $K_\infty^\star$ is 0 almost surely.  

The key to proving this last claim is an understanding of the properties of the $T$ function.  For a generic mixing distribution $P$, supported on $\UU$, rewrite $T$ as 
\[ T(P) = \int (g_P - 1)^2 \, dP, \]
where
\[ g_P(u) = \int \frac{k \mid u)}{m_P(x)} \, m^\star(x) \, dx. \]
For any bounded and continuous function $h: \UU \to \RR$, it follows from the standard bound $|\int \cdots \, du| \leq \int |\cdots| \, du$ and Cauchy--Schwartz that 
\begin{equation}
\label{eq:cs}
\Bigl| \int (g_P-1) \, h \, dP \Bigr|^2 \leq \Bigl\{ \int |g_P-1| \, |h| \, dP \Bigr\}^2 \leq T(P) \int h^2 \, dP. 
\end{equation}
This implies the lower bound 
\[ T(P) \geq \sup_{h: \int h^2 \,dP = 1} \Bigl\{ \int (g_P-1) \, h \, dP \Bigr\}^2, \]
where the supremum is over all bounded and continuous functions $h$ with $\int h^2 \, dP = 1$.  For an alternative look at the integral in the curly braces above, define the operator $\phi$ that maps a probability measure $P$ on $\UU$ to a new probability measure, $\phi(P)$, on $\UU$ according to the formula 
\[ \phi(P)(A) = \int_A g_P(u) \, P(du), \quad A \subseteq \UU, \, \text{measurable}. \]
Then that expression in curly braces is simply 
\[ \int h \, d\phi(P) - \int h \, dP. \]
A consequence of the Robbins--Siegmund theorem is that $\sum_n w_n T(P_{n-1}) < \infty$ almost surely.  Since $w_n$ itself is vanishing too slowly to be summable, it must be that there exists a subsequence $P_{n(t)}$ such that $T(P_{n(t)}) \to 0$ almost surely.  Therefore, 
\[ \sup_{h: \int h^2 \, dP_{n(t)} = 1} \Bigl\{ \int h \, d\phi(P_{n(t)}) - \int h \, dP_{n(t)} \Bigr\}^2 \to 0, \quad \text{almost surely}. \]
Since the original sequence $P_n$ is tight, there is a sub-subsequence $P_{n(t_s)}$ with a weak limit, and the above result implies that the limit is a fixed point of $\phi$.  However, the only fixed points of this mapping are Kullback--Leibler minimizers, say, $P^\dagger$; see, for example, Lemma~3.4 in \citet{shyamalkumar1996}.  This implies $K_{n(t_s)}^\star$ is vanishing almost surely.  However, by the Robbins--Siegmund theorem, we have that the original sequence $K_n^\star$ converges almost surely to some $K_\infty^\star$.  But if the original sequence has a limit and the limit is 0 on a subsequence, then it must be that $K_\infty^\star = 0$ almost surely.  Putting everything together, we have shown that $K_n^\star = K(m^\star, m_n) - K(m^\star, m^\dagger) \to 0$ almost surely, which implies $K(m^\star, m_n) \to K(m^\star, m^\dagger)$, and completes the proof.

\subsection{Proof of Lemma~\ref{lem:KL.min1}}
\label{SS:proof.lemma1}

The proof proceeds in two steps.  First we express the modified target $m^{\star L}$ as a uniform mixture and identify the corresponding mixing distribution, denoted by $P^{\star L}$.  Then we solve the optimization problem that consists of identifying the mixing distribution, $P^\dagger = P^{\dagger \ell, L}$, supported on $\UU = [\ell, L]$, that minimizes $P \mapsto K(m^{\star L}, m_P)$.  

First, recall the definition of $m^{\star L}$, 
\[ m^{\star L}(x) = \frac{m^\star(x) \, 1_{[0,L]}(x)}{M^\star(L)}, \quad x \in [0,\infty), \]
where $M^\star$ is the distribution function corresponding to the density $m^\star$.  By direct calculation, for the denominator we have 
\[ M^\star(L) = P^\star([0,L]) + L m^\star(L). \]
The numerator can also be rewritten as 
\[ m^\star(x) \, 1_{[0,L]}(x) = \int_0^L \unif(x \mid 0, u) \, P^\star(du) +  m^\star(L) \]
After a bit of algebra to simplify the ratio of the sums in the previous two displays, we are able to write $m^{\star L}$ as a mixture 
\begin{equation}
\label{MstarL}
m^{\star L}(x) = \int \unif(x \mid 0, u) \, P^{\star L}(du), 
\end{equation}
where 
\begin{equation}
\label{eq:PstarL}
  P^{\star L} =  \pi \, \Ptilde^{\star L} + (1-\pi) \, \delta_{\{L\}},
\end{equation}
with $\pi$ and $\Ptilde^{\star L}$ defined as 
\[ \pi = \frac{P^\star([0,L])}{P^\star([0,L]) + L m^\star(L)} \quad \text{and} \quad \Ptilde^{\star L}(du) = \frac{P^{\star}(du)1_{[0,L]}(u)}{P^{\star}([0,L])}. \]
That is, $m^{\star L}$ is a uniform mixture, where the mixing distribution $P^{\star L}$ is not just $P^\star$ restricted and renormalized to $[0,L]$, but a mixture of that and a point mass at $L$.  

For step 2, we want to find the minimizer of $P \mapsto \kappa(P) := K(m^{\star L}, m_{P})$, over all mixing distributions supported on $\UU = [\ell, L]$, where $m^{\star L}$ has the mixture form presented above. Using the above notation, the lemma's claim is that the minimizer is 
\[ P^\dagger = \omega \, \delta_{\{\ell\}} + P^{\star L}|_\UU, \]
where $P^{\star L}|_\UU$ is $P^{\star L}$ restricted (but not renormalized) from $[0,L]$ to $\UU = [\ell, L]$, and $\omega = P^{\star L}([0,\ell])$. If we can show that the Gateaux derivative of $\kappa$, evaluated at $P^\dagger$, in the direction of any other distribution $H$ on $\UU$, is vanishing, then we will have proved the claim.  The Gateaux derivative at a generic $P$, in the direction of $H$, is 
\[ \frac{d}{dt} \kappa((1-t)P + tH) \Bigr|_{t=0} = \int_0^L \Bigl\{ 1 - \frac{m_H(x)}{m_P(x)} \Bigr\} \, m^{\star L}(x) \, dx. \]
Let $m^\dagger = m_{P^\dagger}$, which has the form 
\[ m^\dagger(x) = \omega \, \unif(x \mid 0, \ell) + \int_\ell^L \unif(x \mid 0, u) \, P^{\star L}(du). \]
Then the goal is to show that 
\[ \int_0^L \Bigl\{ 1 - \frac{m_H(x)}{m^\dagger(x)} \Bigr\} \, m^{\star L}(x) \, dx = 0 \quad \text{for all $H$ supported on $\UU$}, \]
or, equivalently, to show that 
\begin{equation}
\label{eq:gat1}
1 - \int_0^\ell \frac{m_H(x)}{m^\dagger(x)} \, m^{\star L}(x) \, dx - \int_\ell^L \frac{m_H(x)}{m^\dagger(x)} \, m^{\star L}(x) \, dx = 0    
\end{equation}
On the interval $x \in (\ell, L]$, it is clear that $m^\dagger(x) = m^{\star L}(x)$, so 
\begin{equation}
\label{eq:gat2}
\int_\ell^L \frac{m_H(x)}{m^\dagger(x)} \, m^{\star L}(x) \,dx = \int_\ell^L m_H(x) \, dx. 
\end{equation}
Next, since both $P^\dagger$ and $H$ are supported on $\UU=[\ell,L]$, the two mixture densities $m^\dagger$ and $m_H$ are constant on the interval $x \in [0,\ell]$.  This implies 
\[ \int_0^\ell \frac{m_H(x)}{m^\dagger(x)} \, m^{\star L}(x) \, dx - \int_0^\ell m_H(x) \, dx = \frac{m_H(0)}{m^\dagger(0)} \int_0^\ell \{ m^{\star L}(x) - m^\dagger(x)\} \, dx. \]
We claim that the integral on the right-hand side is 0.  To see this, first integrate $m^\dagger$:
\begin{align*}
\int_0^\ell m^\dagger(x) \, dx & = \omega + \int_0^\ell \int_\ell^L \unif(x \mid 0, u) \, P^{\star L}(du) \, dx \\
& = \omega + \ell m^{\star L}(\ell) \\ 
& = \frac{P^\star([0,L]) + \ell m^\star(\ell)}{M^\star(L)}.
\end{align*}
Similarly, integrate $m^{\star L}$: 
\begin{align*}
\int_0^\ell m^{\star L} (x) dx & = \frac{1}{M^\star(L)} \int_0^\ell \Bigl\{ \int_0^L \unif(x \mid 0, u) \, P^{\star}(du) + m^{\star} (L) \Bigr\} dx \\
& = \frac{1}{M^\star(L)} \Bigl\{ \int_\ell^L (\ell /u) P^\star (du) + \int_0^\ell P^\star(du) + \ell m^\star(L) \Bigr\} \\
& = \frac{1}{M^\star(L)} \bigl\{ \ell m^\star(\ell) - \ell m^\star(L) + P^\star([0,\ell]) + \ell m^\star(L) \bigr\} \\
& = \frac{P^\star([0,\ell]) + \ell m^\star(\ell)}{M^\star(L)}. 
\end{align*}
Clearly the two integrals above are the same, which implies that 
\[ \int_0^\ell \{ m^{\star L}(x) - m^\dagger(x)\} \, dx = 0, \]
and, consequently, that 
\begin{equation}
\label{eq:gat3}
\int_0^\ell \frac{m_H(x)}{m^\dagger(x)} \, m^{\star L}(x) \, dx = \int_0^\ell m_H(x) \, dx. 
\end{equation}
Plugging the relations \eqref{eq:gat2} and \eqref{eq:gat3} into the left-hand side of \eqref{eq:gat1} proves the claim, i.e., that the Gateaux derivative of $\kappa$ at $P^\dagger$ vanishes in all directions $H$, which implies that $P^\dagger$ is the minimizer of the Kullback--Leibler divergence.

\subsection{Proof of Theorem~\ref{thm:monotone}}
\label{SS:proof.thm2}

To prove $K(m^{\star L}, m_n) \to K(m^{\star L}, m^\dagger)$, we apply Theorem~\ref{thm:mixture.KL}.  Condition~\ref{cond:weights} is in the user's control and, hence, is easy to satisfy.  Condition~\ref{cond:support} requires the support of the mixing distribution to be compact, which is clearly satisfied by $\UU = [\ell, L]$.  Condition~\ref{cond:integrable} is the only non-trivial condition, and it requires
\[ \sup_{u \in [\ell, L]} \int_0^L \Bigl\{ \frac{\unif(x \mid 0, u)}{m_0(x)} \Bigr\}^2 \, m^{\star L}(x) \, dx < \infty, \]
where $m_0$ is the mixture density corresponding to the initial guess, $P_0$, which contains point masses.  The key point is, thanks to the point mass at $L$, 
\[ m_0(x) \geq p_{0,L} \, \unif(x \mid 0, L) = p_{0,L} \, L^{-1}, \quad x \in [0, L]. \]
Since the denominator above is uniformly bounded away from 0, and, similarly, the numerator is uniformly bounded by $\ell^{-1}$, Condition~\ref{cond:integrable} clearly holds. 

Next, the claim about convergence of $m_n$ to $m^\dagger$ in total variation follows immediately from Corollary~\ref{cor:mixture.L1} and the fact that $m^{\star L}$ is bounded away from 0.  Finally, for the claim about weak convergence of $P_n$ to $P^\dagger$, we apply Corollary~\ref{cor:mixing}.  We have already stated that $m^\dagger / m^{\star L} \in L_\infty$ since $m^{\star L}$ is bounded away from 0.  So all that remains is to check that the uniform kernel satisfies the abstract condition \eqref{eq:kernel}, which we do next.  

Imagine a generic sequence of mixing distributions $Q_t$ supported on $\UU=[\ell,L]$ and assume they converge weakly to $Q_\infty$.  The condition \eqref{eq:kernel} concerns the behavior of the mixture density $m_{Q_t}(x)$.  Note that the uniform kernel is not a continuous function in $u$ for a given $x$, but it is upper-semicontinuous.  Recall that the mixture densities are constant for $x \in [0,\ell]$. This means that the value of the mixture density on a set of positive measure is determined by its value at $x=\ell$, so some care will be needed below; in particular, we'll have to deal with the cases $x \in [0,\ell]$ and $x \in (\ell, L]$ separately.  

Start with the case $x \in (\ell, L]$.  The kernel $u \mapsto \unif(x \mid 0, u)$ is bounded and continuous except for the jump discontinuity at $u=x$.  It is possible that the limit $Q_\infty$ of the sequence $Q_t)$ of mixing distributions puts positive mass at $u=x$, i.e., that $x$ is a discontinuity point of $Q_\infty$.  In such cases, $m_{Q_t}(x)$ may not converge or, even if it does converge, the limit may not equal $m_{Q_\infty}(x)$.  However, $Q_\infty$'s set of discontinuity points has Lebesgue measure 0.  For any $x \in (\ell, L]$ that is not a discontinuity point of $Q_\infty$, the kernel is effectively bounded and continuous, so $Q_t \to Q_\infty$ weakly implies $m_{Q_t}(x) \to m_{Q_\infty}(x)$.  This verifies \eqref{eq:kernel} for the range $x \in (\ell,L]$.  

For the case $x \in [0,\ell]$, again, we know that the mixture density is constant in $x$.  Therefore, if there is an issue with convergence of the mixture density at $x=\ell$, then that implies an issue on a set of positive Lebesgue measure, hence \eqref{eq:kernel} fails.  However, while the kernel is only upper-semicontinuous in general, $u \mapsto \unif(\ell \mid 0, u)$ is bounded and continuous on the support of the $Q_t$ sequence, so we get $m_{Q_t}(\ell) \to m_{Q_\infty}(\ell)$ automatically from the definition of weak convergence.  This implies the same for all $x \in [0,\ell]$, so \eqref{eq:kernel} holds there too.




\subsection{Proof of Proposition~\ref{prop:bias}}
\label{proof:bias}

By the triangle inequality, we have 
\begin{equation}
\label{eq:bias.bound}
\int |m^{\dagger} - m^\star| \,dx \leq \int |m^{\dagger} - m^{\star L}| \, dx + \int |m^{\star L} - m^\star| \, dx. 
\end{equation}
Now we consider each term in the upper bound \eqref{eq:bias.bound} separately.  Start with the second term, splitting up the range of integration, we immediately get 
\begin{align*}
\int |m^{\star L} - m^\star| \, dx 
&= \int_0^L \Bigl| \frac{m^{\star}}{M^{\star}(L)} -  m^{\star} \Bigr| \, dx + 1 - M^{\star}(L) \\
    &= \frac{1}{M^{\star}(L)} |1 - M^{\star}(L) | \int_0^L m^{\star} \, dx + 1 - M^{\star}(L) \\
    &= 2\{1 - M^{\star}(L)\}. 
\end{align*}
For the first term in \eqref{eq:bias.bound}, we borrow the calculations in the proof of Lemma~\ref{lem:KL.min1} above.  In particular, on the interval $x \in [\ell, L]$, the two densities are the same, but on the interval $x \in [0,\ell)$, the absolute difference between densities is bounded by
\[ |m^{\star L}(x) - m^\dagger(x)| \leq \omega \unif(x \mid 0, \ell) + \int_0^\ell \unif(x \mid 0, u) \, P^{\star L}(du), \quad x \in [0,\ell). \]
Now integrate to get 
\begin{align*}
\int |m^{\dagger} - m^{\star L}| \, dx &= \int_0^\ell |m^{\dagger} - m^{\star L}| \, dx\\
& \leq \omega + P^{\star L}([0,\ell]) \\
& = 2 \cdot \frac{P^\star([0,\ell])}{M^\star(L)}.
\end{align*}
Combining the two bounds proves the claim.

\subsection{Proof of Proposition~\ref{prop:origin}}
\label{proof:origin}

As shown in the proof of Theorem~\ref{thm:monotone}, $m_n (\ell) \to m^\dagger(\ell)$ almost surely with respect to $m^{\star L}$.  Since $m_n(0) = m_n(\ell)$ and $m^\dagger(0) = m^\dagger(\ell)$ by Equation \eqref{eq:monotone}, the proof of the first claim is complete.  To bound the bias, i.e., the difference between the quantity being estimated, $m^\dagger(0)$, and and the true density at the origin, $m^\star(0)$, we proceed as follows.  
\begin{align*}
m^\dagger (0) - m^\star (0) &= a_\ell \ell^{-1} + a_\UU \int_\UU u^{-1} \, P^\star(du) + a_L L^{-1} - \int_0^\infty u^{-1} P^\star (du)\\
&= \Bigl\{ a_\ell \ell^{-1} -  \int_0^\ell u^{-1} P^\star (du) \Bigr\} + \Bigl\{ (a_\UU - 1) \int_\UU u^{-1} \, P^\star(du)  \Bigr\} \\
    & \qquad + \Bigl\{ a_L L^{-1} -  \int_L^\infty u^{-1} P^\star (du) \Bigr\}. 
\end{align*}
Using the definitions of $a_\ell$, $a_\UU$, and $a_L$, the bound $P^\star ([0, \ell]) \lesssim \ell$, and the fact that $\int_\UU u^{-1} \, P^\star(du) = O(1)$ as a function of $(\ell, L)$, it is easy to check that each of the three terms on the right-hand side above can be bounded by $1-M^\star(L)$.  That is, 
\begin{align*}
a_\ell \ell^{-1} -  \int_0^\ell u^{-1} \, P^\star(du) & \lesssim M^\star(L)^{-1} - 1 \lesssim 1-M^\star(L) \\
(a_\UU - 1) \int_\UU u^{-1} \, P^\star(du)  & \lesssim 1 - M^\star(L) \\
a_L L^{-1} -  \int_L^\infty u^{-1} \, P^\star(du) & \lesssim 1 - M^\star(L),
\end{align*}
which completes the proof of the claim. 

\bibliographystyle{apalike}
\bibliography{refs}

\end{document}